\newtheorem{theorem}{Theorem}[section]
\newtheorem{corollary}[theorem]{Corollary}
\newtheorem{lemma}[theorem]{Lemma}
\newtheorem{proposition}[theorem]{Proposition}
\newtheorem{definition-proposition}[theorem]{Definition-Proposition}
\newtheorem{question}[theorem]{Question}
\theoremstyle{definition}
\newtheorem{definition}[theorem]{Definition}
\newtheorem{example}[theorem]{Example}
\theoremstyle{remark}
\newcommand{\mm}{{\mathfrak{m}}}
\newcommand{\pp}{{\mathfrak{p}}}
\newcommand{\depth}{\operatorname{depth}\nolimits}
\newcommand{\rank}{\operatorname{rank}\nolimits}
\newcommand{\Ext}{\operatorname{Ext}\nolimits}
\newcommand{\Hom}{\operatorname{Hom}\nolimits}
\newcommand{\End}{\operatorname{End}\nolimits}
\newcommand{\RHom}{\mathbf{R}\strut\kern-.2em\operatorname{Hom}\nolimits}
\newcommand{\RshHom}{\mathbf{R}\strut\kern-.2em\mathscr{H}\strut\kern-.3em\operatorname{om}\nolimits}
\newcommand{\shHom}{\mathscr{H}\strut\kern-.3em\operatorname{om}\nolimits}
\newcommand{\Spec}{\operatorname{Spec}\nolimits}
\newcommand{\GL}{\operatorname{GL}\nolimits}
\newcommand{\SL}{\operatorname{SL}\nolimits}
\newcommand{\Cl}{\operatorname{Cl}\nolimits}
\DeclareMathOperator{\moduleCategory}{\mathsf{mod}} \renewcommand{\mod}{\moduleCategory}
\DeclareMathOperator{\Mod}{\mathsf{Mod}}
\DeclareMathOperator{\proj}{\mathsf{proj}}
\DeclareMathOperator{\CM}{\mathsf{CM}}
\DeclareMathOperator{\add}{\mathsf{add}}
\DeclareMathOperator{\refl}{\mathsf{ref}}
\newcommand{\cut}{\ar@{-}@[|(5)]}
\numberwithin{equation}{section}
\newcommand{\sfS}{\mathsf{S}}
\newcommand{\QQ}{\mathbb{Q}}
\begin{document}

\title{On steady non-commutative crepant resolutions}  
\author[O. Iyama \and Y. Nakajima]{Osamu Iyama \and Yusuke Nakajima} 

\address[O. Iyama]{Graduate School of Mathematics, Nagoya University, Furocho, Chikusaku, Nagoya 464-8602, Japan}
\email{iyama@math.nagoya-u.ac.jp}
\urladdr{http://www.math.nagoya-u.ac.jp/~iyama/}

\address[Y. Nakajima]{Graduate School Of Mathematics, Nagoya University, Furocho, Chikusaku, Nagoya, 464-8602 Japan 
\footnote{Present address: Kavli Institute for the Physics and Mathematics of the Universe (WPI), UTIAS, The University of Tokyo, Kashiwa, Chiba 277-8583, Japan}} 
\email{m06022z@math.nagoya-u.ac.jp \footnote{Present email: yusuke.nakajima@ipmu.jp}}


\subjclass[2010]{Primary 16G50, 14A22 ; Secondary 13C14,  14E15} 
\keywords{Non-commutative crepant resolutions, class groups, quotient singularities} 

\maketitle

\begin{abstract} 
We introduce special classes of non-commutative crepant resolutions (= NCCR) which we call steady and splitting.
We show that a singularity has a steady splitting NCCR if and only if it is a quotient singularity by a finite abelian group.
We apply our results to toric singularities and dimer models. 
\end{abstract}



\section{Introduction} 

Geometric tilting theory gives a framework to study the derived categories of schemes and stacks in terms of non-commutative algebras obtained as the endomorphism algebras of tilting complexes. A well-studied class of geometric tilting is called McKay correspondence, which are given by resolution of singularities \cite{KV, BKR}. Recently, Van den Bergh introduced the notion of non-commutative crepant resolutions (= NCCR) \cite{VdB1,VdB2} to explain Bridgeland's theorem \cite{Bri} on derived equivalence of crepant resolutions \cite{BO}, from a viewpoint of geometric tilting theory. 
For example, in the case of McKay correspondence, skew group algebras are NCCRs of quotient singularities.

For nice classes of singularities, NCCRs provide a method to construct non-commutative rings derived equivalent to their resolutions directly from the given singularities.
The notion of NCCRs is quite useful thanks to the fact that it naturally appears in Cohen-Macaulay representation theory
initiated by Auslander-Reiten in 70s \cite{Aus2}. In fact, the recent notion of cluster tilting subcategories gives a categorical framework to study NCCRs (see \cite{Iya,IR,IW2,IW4}).

An interesting family of NCCRs is given by a dimer model, which is a quiver with potential drawn on a torus 
and gives us an NCCR of a Gorenstein toric singularity in dimension three (see e.g. \cite{Bei, Boc2, Bro, Dav, IU, MR}). 
For more results and examples of NCCRs, we refer to \cite{BLVdB, BIKR, Dao, DFI, DH, IW1, IW3, TU, Wem}. See also a survey article \cite{Leu}, and the references therein.

\medskip

In this paper, we introduce nice classes of NCCRs which we call steady and splitting, 
and discuss existence of such NCCRs. 
We start with recalling the definition of a non-commutative crepant resolution due to Van den Bergh \cite{VdB2} (see also \cite{VdB1}). 
For further details on terminologies, see later sections. 

\begin{definition}
Let $R$ be a CM ring and $\Lambda$ be a module-finite $R$-algebra. We say 
 \begin{enumerate}
\setlength{\parskip}{0pt} 
\setlength{\itemsep}{0pt}
 \item $\Lambda$ is an \emph{$R$-order} if $\Lambda$ is a CM $R$-module, 
 \item an $R$-order $\Lambda$ is \emph{non-singular} if ${\rm gl.dim}\,\Lambda_\pp={\rm dim}\,R_\pp$ for all $\pp\in\Spec R$. 
 \end{enumerate}
\end{definition}

We refer to \cite[2.17]{IW2} for several conditions which are equivalent to $\Lambda$ is a non-singular $R$-order. 
Using this notion, Van~den~Bergh \cite{VdB2} introduced the notion of NCCR. (Note that unlike \cite{VdB2}, we do not assume that $R$ is Gorenstein in this paper.) 
Also we recall the notion of NCR (see \cite{DITV}). 

\begin{definition}
\label{NCCR_def}
Let $R$ be a CM ring, and $0\neq M\in\refl R$. Let $E:=\End_R(M)$. 
\begin{enumerate}
\setlength{\parskip}{0pt} 
\setlength{\itemsep}{0pt}
\item We say $E$ is a \emph{non-commutative crepant resolution} (= \emph{NCCR}) of $R$ 
or $M$ \emph{gives an NCCR} of $R$ if $E$ is a non-singular $R$-order. 
\item We say $E$ is a \emph{non-commutative resolution} (= \emph{NCR}) of $R$ 
or $M$ \emph{gives an NCR} of $R$ if ${\rm gl.dim}\,E<\infty$. 
\end{enumerate}
\end{definition}

\subsection{Our results}

The existence of an NCCR of $R$ shows $R$ has a mild singularity. 
For example, it was shown in \cite{SVdB} (see also \cite{DITV}) that under mild assumptions, any Gorenstein ring which 
has an NCCR has only rational singularities. 

In this paper, we impose extra assumptions on NCCRs, 
and we will show the existence of such an NCCR characterizes some singularities. 
Our point is that the size of $\End_R(M)$ as an $R$-module becomes much bigger than that of $M$, 
that is, $\rank_R\End_R(M)=(\rank_RM)^2$. 
Therefore, as an $R$-module, $\End_R(M)$ usually has a direct summand which does not appear in $M$. 
Therefore the following class of NCCRs is of interest.  

\begin{definition}
\label{def_steady}
We say a reflexive $R$-module $M$ is \emph{steady} if $M$ is a \emph{generator} (that is, $R\in\add_RM$) and $\End_R(M)\in\add_RM$ holds.
We say an NCCR (resp. NCR) $\End_R(M)$ is \emph{steady NCCR} (resp. \emph{steady NCR}) if $M$ is steady.
\end{definition}

We refer to Lemma~\ref{basic_pro} for basic properties of steady modules.
Also note that the first condition ``$M$ is a generator'' is a consequence of the
second condition ``$\End_R(M)\in\add_RM$'' in many cases (see Lemma~\ref{prop_steady}). 

For example, quotient singularities have a steady NCCR (see Example~\ref{steady_example}). 
A natural question is the converse, that is, singularities having steady NCCRs are quotient singularities.
The aim of this paper is to give a partial answer to this question. 
To consider such a question, we introduce another class of nice NCCRs called splitting, and 
show that existence of steady splitting NCCRs implies quotient singularities. 

\begin{definition}
\label{def_splitting}
We say a reflexive $R$-module $M$ is \emph{splitting} if $M$ is a direct sum of reflexive modules of rank one. 
We say an NCCR (resp. NCR) $\End_R(M)$ is \emph{splitting NCCR} (resp. \emph{splitting NCR}) if $M$ is splitting. 
\end{definition}

There are several examples of splitting NCCRs (see Example~\ref{splitting_example}).

We are now ready to state the main theorem, which will be shown in Section~\ref{sec_main}. 

\begin{theorem}
\label{main_intro} (see Theorem~\ref{main_thm})
Let $R$ be a $d$-dimensional complete local Cohen-Macaulay normal domain containing an algebraically closed field of characteristic zero. 
Then the following conditions are equivalent.
\begin{enumerate}[$\bullet$]
\setlength{\parskip}{0pt} 
\setlength{\itemsep}{0pt}
\item $R$ is a quotient singularity associated with a finite abelian group $G\subset\GL(d, k)$ (i.e. $R=S^G$ where $S=k[[x_1, \cdots, x_d]]$).
\item $R$ has a unique basic module which gives a splitting NCCR. 
\item $R$ has a steady splitting NCCR.
\item There exists a finite subgroup $G$ of $\Cl(R)$ such that $\bigoplus_{X\in G}X$ gives an NCCR of $R$.
\item $\Cl(R)$ is a finite group and $\bigoplus_{X\in\Cl(R)}X$ gives an NCCR of $R$.
\end{enumerate}
\end{theorem}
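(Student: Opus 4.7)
The plan is to establish the cycle $(1) \Rightarrow (5) \Rightarrow (4) \Rightarrow (3) \Rightarrow (2) \Rightarrow (1)$. The step $(5) \Rightarrow (4)$ is tautological, and $(1) \Rightarrow (5)$ is essentially Auslander's theorem on skew group algebras: after replacing $G$ by its quotient by the subgroup generated by pseudo-reflections, the isotypic decomposition $S = \bigoplus_{\chi \in G^\vee} S_\chi$ exhibits $\Cl(R) \cong G^\vee$ via reflexive rank-one $R$-modules, and $\End_R(S) \cong S \rtimes G$ is a non-singular $R$-order.

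For $(4) \Rightarrow (3)$, the module $M = \bigoplus_{X \in G} X$ is splitting by definition, contains $R$ as the summand corresponding to the identity of $G$ (hence is a generator), and
\[
\End_R(M) \;\cong\; \bigoplus_{X, Y \in G}(Y \otimes_R X^{\vee})^{**}
\]
lies in $\add_R M$ because $G \subset \Cl(R)$ is closed under products. For $(3) \Rightarrow (2)$, given a steady splitting NCCR $\End_R(M)$, let $L_1, \ldots, L_n$ denote the pairwise non-isomorphic rank-one reflexive summands of $M$; steadiness forces the subset $\{[L_i]\} \subset \Cl(R)$ to be closed under $[L_i]\cdot[L_j]^{-1}$, hence to be a finite subgroup $G$, and the basic module $M_0 = \bigoplus_{X \in G} X$ yields an NCCR by the standard fact that passing to a basic module in the same additive closure preserves the NCCR property (Morita). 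Uniqueness of such an $M_0$ will follow from the rigidity of the $(2) \Rightarrow (1)$ construction: the associated cover $\widetilde R$ must be the universal abelian \'etale cover of $R$ in codimension one, so the subgroup $G$ is forced to equal $\Cl(R)$.

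The heart of the proof is $(2) \Rightarrow (1)$. Given $M = \bigoplus_{X \in G} X$ a basic splitting NCCR with $G \subset \Cl(R)$ finite abelian, I endow $\widetilde R := M$ with its canonical $G$-graded $R$-algebra structure coming from the multiplication maps $X \otimes_R Y \to (X \otimes_R Y)^{**} \cong XY$ (the Cox-ring-like construction, associativity being inherited from the product in $\Cl(R)$). The dual group $G^\vee = \Hom(G, k^\times)$ acts on $\widetilde R$ by characters with $\widetilde R^{G^\vee} = R$, and the key technical step is the Cohen-Montgomery / graded Galois-descent isomorphism
\[
\End_R(M) \;=\; \End_R(\widetilde R) \;\cong\; \widetilde R \rtimes G^\vee,
\]
which holds because $R \subset \widetilde R$ is unramified in codimension one (using normality of $R$ and reflexivity of each $X$) and $|G|$ is invertible in $k$. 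Granting this identification, the NCCR hypothesis gives $\gl \End_R(M) = d$ upon localisation, and the standard equality $\gl(A \rtimes H) = \gl A$ for $|H|$ invertible in $A$ yields $\gl \widetilde R = d$. Since $\widetilde R$ is a normal domain (as a Cox-type construction over the normal domain $R$) finite over the complete local CM ring $R$, it is itself local CM of dimension $d$; by Auslander-Buchsbaum-Serre it is regular, and Cohen's structure theorem gives $\widetilde R \cong k[[x_1, \ldots, x_d]]$. Linearising the $G^\vee$-action on $\mathfrak m_{\widetilde R}/\mathfrak m_{\widetilde R}^2$ (possible in characteristic zero) embeds $G^\vee \hookrightarrow \GL(d,k)$ with $R = \widetilde R^{G^\vee}$, establishing (1). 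The main obstacle is the Cohen-Montgomery identification: both equipping $\widetilde R$ with a coherent associative product (eliminating the potential 2-cocycle obstructions in the choices of the isomorphisms $X \otimes_R Y \cong XY$) and proving that the natural map to $\End_R(\widetilde R)$ is an isomorphism require a careful Galois-descent argument in codimension one, leaning on normality of $R$, reflexivity of the $X \in G$, and the NCCR condition itself.
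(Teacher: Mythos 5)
Your overall strategy coincides with the paper's: pass from the group $G\subset\Cl(R)$ to the graded ring $\widetilde R=\bigoplus_{X\in G}X$, identify $\End_R(\widetilde R)$ with a crossed-product construction (you use $\widetilde R\rtimes G^\vee$ where the paper uses the smash product $\widetilde R\# G$; these are the same by Cohen-Montgomery duality), deduce regularity of $\widetilde R$ from the NCCR condition, and linearize the $G^\vee$-action. However, there are two concrete gaps. First, your implication ``$(2)\Rightarrow(1)$'' starts from ``$M=\bigoplus_{X\in G}X$ with $G\subset\Cl(R)$ a finite subgroup,'' but condition $(2)$ only gives you a \emph{unique basic} splitting NCCR $N$, with no a priori group structure on its summand classes. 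You need the paper's short argument (the step $(3)\Rightarrow(4)$ in Theorem~\ref{main_thm}): for each summand $N_i$, $\Hom_R(N_i,N)$ is again a splitting NCCR, hence by uniqueness $\Hom_R(N_i,N)\cong N$, so $\End_R(N)\cong N^{\oplus n}$ and $N$ is steady, whence the summand classes form a subgroup by Proposition~\ref{steady_group}. Without this your cycle does not close.

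Second, your treatment of uniqueness in $(3)\Rightarrow(2)$ is an assertion rather than a proof. Saying ``$\widetilde R$ must be the universal abelian \'etale cover in codimension one, so $G=\Cl(R)$'' is only half the work: you should actually record that $\widetilde R$ regular $\Rightarrow\Cl(\widetilde R)=0$, and then descend, as in Proposition~\ref{prepare graded}(c), to conclude that \emph{every} rank-one reflexive module appears among the $S_i$. And even granting $G=\Cl(R)$, to get uniqueness of the basic splitting NCCR you still need the maximality property of NCCRs (Proposition~\ref{maximal_NCCR}): if $M'$ is any splitting NCCR, its summands lie in $\Cl(R)$ so $M'\in\add_R M$, hence $\End_R(M\oplus M')\in\CM R$, and maximality applied to the NCCR $M'$ then forces $M\in\add_R M'$. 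You do not invoke maximality anywhere, and uniqueness simply does not follow without it. It is also worth noting that the paper has an alternative, more elementary route to ``the summand classes generate all of $\Cl(R)$'' that avoids regularity entirely: Proposition~\ref{cl_finite} shows via a $\mathrm K_0$-argument that finite global dimension of $\End_R(M)$ (for $M$ a generator) already forces $\mathrm K_0(R)$, hence $\Cl(R)$, to be generated by $[M_1],\dots,[M_n]$; combined with steadiness this gives $\Cl(R)=\{[M_1],\dots,[M_n]\}$ directly, before any cover is built.
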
 

When $R$ is a toric singularity, we obtain the following simple equivalent condition by applying results in toric geometry. 

\begin{corollary}
\label{toric} 
Let $R$ be a completion of a toric singularity over an algebraically closed field of characteristic zero. 
Then all the conditions in Theorem~\ref{main_intro} and the following condition are equivalent. 
\begin{enumerate}[$\bullet$]
\setlength{\parskip}{0pt} 
\setlength{\itemsep}{0pt}
\item $\Cl(R)$ is a finite group.
\end{enumerate}
\end{corollary}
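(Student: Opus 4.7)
\textbf{Plan for the proof of Corollary~\ref{toric}.}

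One direction is immediate: condition (5) in Theorem~\ref{main_intro} already states that $\Cl(R)$ is finite, so any of the five equivalent conditions there implies finiteness of the class group. Thus the entire content of the corollary is the converse: for a toric $R$, finiteness of $\Cl(R)$ forces $R$ to be a quotient singularity by a finite abelian group (whence all the conditions of Theorem~\ref{main_intro} hold). My plan is therefore to translate ``$\Cl(R)$ finite'' into a combinatorial statement about the defining cone, and then invoke the standard toric quotient construction.

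More precisely, write $R$ (or rather its completion) as the ring of a rational polyhedral cone $\sigma\subset N_{\R}$ of maximal dimension $d=\dim R$, with $M=\Hom_{\Z}(N,\Z)$. The standard toric description of divisors gives an exact sequence
\[
M\longrightarrow \Z^{\sigma(1)}\longrightarrow \Cl(R)\longrightarrow 0,
\]
where $\sigma(1)$ is the set of rays of $\sigma$ and the first map sends $m$ to $(\langle m,v_\rho\rangle)_\rho$ for the primitive generators $v_\rho$. Counting ranks, $\Cl(R)$ is finite if and only if $\#\sigma(1)=d$, i.e.\ $\sigma$ is \emph{simplicial}. This is where I would carry out the first step: cite (or briefly recall) this standard fact from toric geometry.

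Once $\sigma$ is simplicial with primitive ray generators $v_1,\dots,v_d$, let $N'\subset N$ be the sublattice generated by $v_1,\dots,v_d$. The cone $\sigma$, viewed in $N'_{\R}$, is a smooth (basic) cone, so the corresponding toric ring is a polynomial ring $S=k[[x_1,\dots,x_d]]$. The inclusion $N'\hookrightarrow N$ is of finite index, and its dual induces an action of the finite abelian group $G=\Hom(N/N',k^\times)\subset\GL(d,k)$ on $S$; the invariants are exactly the ring coming from $\sigma\subset N_{\R}$, giving $R\cong S^G$. This realizes $R$ as a quotient singularity by a finite abelian group, which is condition (1) of Theorem~\ref{main_intro}. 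Combining with the theorem, all the listed conditions hold.

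I do not anticipate a genuine obstacle: both ingredients (the rank computation of $\Cl(R)$ from the ray sequence, and the quotient presentation of simplicial toric singularities by a diagonal finite abelian group) are classical. The only care needed is to pass to the completion correctly so that the finite-group quotient construction applies in the form required by Theorem~\ref{main_intro}, where $S$ is a formal power series ring and $G\subset\GL(d,k)$ acts linearly; since $G$ acts diagonally on $k[x_1,\dots,x_d]$ via characters, completing at the origin gives precisely the setup $S^G$ with $S=k[[x_1,\dots,x_d]]$, so the translation is harmless.
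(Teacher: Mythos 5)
Your proposal follows essentially the same route as the paper: the easy direction via condition (5)/(8)/(9) of the theorem, and for the converse translate finiteness of the class group into simpliciality of the defining cone and then realize a simplicial affine toric ring as an invariant ring under a finite abelian group acting diagonally. The paper simply cites \cite[4.59]{BG} and \cite[1.3.20]{CLS} for the step ``simplicial $\Leftrightarrow$ abelian quotient,'' whereas you spell out the sublattice/character construction; that is a presentational difference, not a different argument.

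There is, however, one point you gloss over that the paper handles explicitly. The exact sequence
$M\to\Z^{\sigma(1)}\to\Cl(\cdot)\to 0$
computes the class group of the \emph{uncompleted} affine toric ring $A=k[\sfS]$, not of its completion $R=\widehat A$; in general completion can only be said to induce an \emph{injection} $\Cl(A)\hookrightarrow\Cl(R)$ (the paper cites \cite[15.2]{Yos} for this). Your argument starts from ``$\Cl(R)$ finite'' and implicitly reads off simpliciality of $\sigma$ from the toric exact sequence, which presupposes $\Cl(R)\cong\Cl(A)$. To make the step correct you should insert exactly what the paper does: since $\Cl(A)\hookrightarrow\Cl(R)$ and $\Cl(R)$ is finite, $\Cl(A)$ is finite, hence $\sigma$ is simplicial; then proceed with your quotient construction and complete. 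With that one line added, your proof agrees with the paper's.
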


Now we apply this result to dimer models.

Let us recall basic facts on dimer models. For more details, see Example~\ref{splitting_example} (b) and references therein.
A dimer model is a polygonal cell decomposition of the two-torus whose vertices and edges form a finite bipartite graph, 
and we can obtain a quiver with the potential $(Q_{\Gamma},W_{\Gamma})$ as the dual of a dimer model $\Gamma$. 
From a quiver with the potential $(Q_{\Gamma},W_{\Gamma})$, we define the complete Jacobian algebra $\mathcal{P}(Q_{\Gamma},W_{\Gamma})$. 
Under a certain condition called ``consistency condition", the center $R$ of $\mathcal{P}(Q_{\Gamma},W_{\Gamma})$ is a complete local Gorenstein toric singularity in dimension three, 
and $\mathcal{P}(Q_{\Gamma},W_{\Gamma})$ is a splitting NCCR of $R$. 

Thanks to our Theorem \ref{main_intro}, we have the following result which
characterizes when $\mathcal{P}(Q_{\Gamma},W_{\Gamma})$ is a steady NCCR of $R$.

\begin{corollary}\label{dimer}
Let $\Gamma$ be a consistent dimer model, $k$ an algebraically closed field of characteristic zero and $R$ the corresponding complete local Gorenstein toric singularity in dimension three. Then the following conditions are equivalent.
\begin{enumerate}[$\bullet$]
\setlength{\parskip}{0pt} 
\setlength{\itemsep}{0pt}
\item $\Gamma$ is homotopy equivalent to a regular hexagonal dimer model (i.e. each face of a dimer model is a regular hexagon).
\item $\Gamma$ gives a steady NCCR of $R$.
\item $R$ is a quotient singularity associated with a finite abelian group $G\subset\SL(3, k)$ (i.e. $R=S^G$ where $S=k[[x_1,x_2, x_3]]$).
\end{enumerate}
\end{corollary}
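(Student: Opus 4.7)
The plan is to reduce this three-way equivalence to Theorem~\ref{main_intro} together with the classical dictionary between consistent dimer models and three-dimensional Gorenstein toric singularities. Since the Jacobian algebra $\mathcal{P}(Q_{\Gamma},W_{\Gamma})$ of a consistent dimer model is already a splitting NCCR of $R$ (as recalled in the introduction), the real content lies in deciding when this splitting NCCR is steady, and then pairing abelian quotient singularities with hexagonal tilings.

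For the equivalence of the second and third bullets, I would first observe that the consistency hypothesis makes $\mathcal{P}(Q_{\Gamma},W_{\Gamma})$ a splitting NCCR of $R$, so assuming the second bullet is the same as assuming $R$ has a steady splitting NCCR. Since $R$ is a complete local Gorenstein normal domain in dimension three, Theorem~\ref{main_intro} then yields $R\cong S^G$ for some finite abelian $G\subset\GL(3,k)$; the Gorenstein assumption allows one to conjugate $G$ into $\SL(3,k)$, giving the third bullet. For the converse I would use the uniqueness statement in Theorem~\ref{main_intro}: if $R\cong S^G$ with $G\subset\SL(3,k)$ abelian, then $R$ has a \emph{unique} basic reflexive module giving a splitting NCCR, and this module is automatically steady. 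Since $\mathcal{P}(Q_{\Gamma},W_{\Gamma})$ also provides a splitting NCCR, its basic version must coincide with this unique module and hence is steady.

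For the equivalence of the first and third bullets, I would invoke the correspondence between consistent dimer models on the torus and three-dimensional Gorenstein toric singularities. The McKay quiver of a diagonal finite abelian subgroup $G\subset\SL(3,k)$, drawn on the quotient torus $\R^2/L_G$ for a suitable lattice $L_G$ of index $|G|$, admits a natural realization as a regular hexagonal dimer model, and conversely every regular hexagonal dimer model on the torus arises in this way. Because homotopy equivalences of consistent dimer models preserve the perfect matching polygon and hence $R$ (and preserve the Jacobian algebra up to Morita equivalence), this yields the desired equivalence. The main obstacle is the direction from the third bullet to the first: one must argue that \emph{every} consistent dimer model producing a given abelian quotient singularity is homotopy equivalent to the hexagonal one, not merely that some such model is hexagonal. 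The cleanest route is to combine the uniqueness statement of Theorem~\ref{main_intro} with the known fact that a consistent dimer model on the torus is determined up to homotopy equivalence by its Jacobian algebra, so the given $\Gamma$ must be homotopy equivalent to the explicit hexagonal model built from $G$.
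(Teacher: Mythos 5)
Your plan mirrors the paper's proof in structure and substance.  For (2)$\Leftrightarrow$(3), both routes reduce to Theorem~\ref{main_intro}; you are in fact slightly more explicit than the paper, since you spell out the passage from a small abelian $G\subset\GL(3,k)$ to $G\subset\SL(3,k)$ via Watanabe's theorem (the paper leaves this step implicit). For (1)$\Rightarrow$(3), both rely on the dictionary between regular hexagonal dimer models and finite abelian subgroups of $\SL(3,k)$; the paper cites \cite[Lemma~5.3]{HIO} for this.

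The one place you diverge is (3)$\Rightarrow$(1).  Both you and the paper use the uniqueness statement of Theorem~\ref{main_intro} to deduce $\mathcal{P}(Q_\Gamma,W_\Gamma)\simeq\End_R(S)\simeq S*G$, but from there the paper proceeds concretely: the isomorphism forces $Q_\Gamma$ to be the McKay quiver of $G$, so each vertex has in-degree and out-degree $3$, hence each face of $\Gamma$ has six edges, and one invokes \cite[Section~5]{UY} to conclude $\Gamma$ is homotopy equivalent to the regular hexagonal model.  You instead appeal to a blanket ``known fact'' that a consistent dimer model on the torus is determined up to homotopy equivalence by its Jacobian algebra.  That statement is considerably stronger than what is needed and is not a cleanly citable theorem in the generality you invoke; you would need to supply a precise reference or else replace it by the paper's elementary counting argument on the McKay quiver (which requires only the quiver, not the full dimer structure). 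As written, this is the one step in your argument that needs to be repaired before the proof is complete.
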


We show some examples of dimer models.

\begin{example}
The following figures are a consistent dimer model which is homotopy equivalent to a regular hexagonal dimer model, and the associated quiver. 
This quiver is just the McKay quiver of $G=\langle\mathrm{diag}(\omega,\omega^5,\omega^8)\rangle$ where 
$\omega$ is a primitive $14$-th root of unity. 
Also, the center of the Jacobian algebra is the quotient singularity associated with $G$. 

\begin{center}
\begin{tikzpicture}
\node (DM) at (0,0) 
{\scalebox{0.384}{
\begin{tikzpicture}
\node (B1) at (3,0){$$}; \node (B2) at (5,0.5){$$}; \node (B3) at (0,1){$$}; \node (B4) at (2,1.5){$$}; \node (B5) at (4,2){$$}; \node (B6) at (6,2.5){$$}; 
\node (B7) at (1,3){$$}; \node (B8) at (3,3.5){$$}; \node (B9) at (5,4){$$}; \node (B10) at (0,4.5){$$}; \node (B11) at (2,5){$$}; \node (B12) at (4,5.5){$$}; 
\node (B13) at (6,6){$$}; \node (B14) at (1,6.5){$$}; 
\node (W1) at (6,0){$$}; \node (W2) at (1,0.5){$$}; \node (W3) at (3,1){$$}; \node (W4) at (5,1.5){$$}; \node (W5) at (0,2){$$}; \node (W6) at (2,2.5){$$}; 
\node (W7) at (4,3){$$}; \node (W8) at (6,3.5){$$}; \node (W9) at (1,4){$$}; \node (W10) at (3,4.5){$$}; \node (W11) at (5,5){$$}; \node (W12) at (0,5.5){$$}; 
\node (W13) at (2,6){$$}; \node (W14) at (4,6.5){$$}; 
\draw[ultra thick]  (-0.5,-0.3) rectangle (6.5,6.7);

\draw[line width=0.07cm]  (W2)--(B4)--(W6)--(B7)--(W5)--(B3)--(W2); \draw[line width=0.07cm]  (W3)--(B5)--(W7)--(B8)--(W6)--(B4)--(W3);
\draw[line width=0.07cm]  (W4)--(B6)--(W8)--(B9)--(W7)--(B5)--(W4); \draw[line width=0.07cm]  (W6)--(B8)--(W10)--(B11)--(W9)--(B7)--(W6);
\draw[line width=0.07cm]  (W7)--(B9)--(W11)--(B12)--(W10)--(B8)--(W7); \draw[line width=0.07cm]  (W9)--(B11)--(W13)--(B14)--(W12)--(B10)--(W9);
\draw[line width=0.07cm]  (W3)--(B1); \draw[line width=0.07cm]  (W4)--(B2)--(W1); \draw[line width=0.07cm]  (B12)--(W14); \draw[line width=0.07cm]  (W11)--(B13);
\draw[line width=0.07cm]  (W2)--(1,-0.3); \draw[line width=0.07cm]  (W1)--(6,-0.3); \draw[line width=0.07cm]  (B13)--(6,6.7); 
\draw[line width=0.07cm]  (W1)--(6.5,0.5); \draw[line width=0.07cm]  (W8)--(6.5,4);\draw[line width=0.07cm]  (W5)--(-0.5,2.25); \draw[line width=0.07cm]  (W12)--(-0.5,5.75); 
\draw[line width=0.07cm]  (B3)--(-0.5,0.5); \draw[line width=0.07cm]  (B10)--(-0.5,4); \draw[line width=0.07cm]  (B6)--(6.5,2.25); \draw[line width=0.07cm]  (B13)--(6.5,5.75);
\draw[line width=0.07cm]  (B1)--(2.7,-0.3); \draw[line width=0.07cm]  (B1)--(3.6,-0.3); \draw[line width=0.07cm]  (B2)--(4.2,-0.3);
\draw[line width=0.07cm]  (W13)--(2.7,6.7); \draw[line width=0.07cm]  (W14)--(3.6,6.7); \draw[line width=0.07cm]  (W14)--(4.2,6.7);
\filldraw  [ultra thick, fill=black] (3,0) circle [radius=0.2] ; \filldraw  [ultra thick, fill=black] (5,0.5) circle [radius=0.2] ; 
\filldraw  [ultra thick, fill=black] (0,1) circle [radius=0.2] ; \filldraw  [ultra thick, fill=black] (2,1.5) circle [radius=0.2] ; \filldraw  [ultra thick, fill=black] (4,2) circle [radius=0.2] ;
\filldraw  [ultra thick, fill=black] (6,2.5) circle [radius=0.2] ; \filldraw  [ultra thick, fill=black] (1,3) circle [radius=0.2] ; \filldraw  [ultra thick, fill=black] (3,3.5) circle [radius=0.2] ;
\filldraw  [ultra thick, fill=black] (5,4) circle [radius=0.2] ; \filldraw  [ultra thick, fill=black] (0,4.5) circle [radius=0.2] ; \filldraw  [ultra thick, fill=black] (2,5) circle [radius=0.2] ;
\filldraw  [ultra thick, fill=black] (4,5.5) circle [radius=0.2] ; \filldraw  [ultra thick, fill=black] (6,6) circle [radius=0.2] ; \filldraw  [ultra thick, fill=black] (1,6.5) circle [radius=0.2] ;
\draw  [ultra thick,fill=white] (6,0) circle [radius=0.2] ; \draw  [ultra thick,fill=white] (1,0.5) circle [radius=0.2] ; \draw  [ultra thick,fill=white] (3,1) circle [radius=0.2] ;
\draw  [ultra thick,fill=white] (5,1.5) circle [radius=0.2] ; \draw  [ultra thick,fill=white] (0,2) circle [radius=0.2] ; \draw  [ultra thick,fill=white] (2,2.5) circle [radius=0.2] ; 
\draw  [ultra thick,fill=white] (4,3) circle [radius=0.2] ; \draw  [ultra thick,fill=white] (6,3.5) circle [radius=0.2] ; \draw  [ultra thick,fill=white] (1,4) circle [radius=0.2] ;
\draw  [ultra thick,fill=white] (3,4.5) circle [radius=0.2] ; \draw  [ultra thick,fill=white] (5,5) circle [radius=0.2] ; \draw  [ultra thick,fill=white] (0,5.5) circle [radius=0.2] ;
\draw  [ultra thick,fill=white] (2,6) circle [radius=0.2] ; \draw  [ultra thick,fill=white] (4,6.5) circle [radius=0.2] ;
\end{tikzpicture}
} }; 

\node (QV) at (6,0) 
{\scalebox{0.384}{
\begin{tikzpicture}
\node (B1) at (3,0){$$}; \node (B2) at (5,0.5){$$}; \node (B3) at (0,1){$$}; \node (B4) at (2,1.5){$$}; \node (B5) at (4,2){$$}; \node (B6) at (6,2.5){$$}; 
\node (B7) at (1,3){$$}; \node (B8) at (3,3.5){$$}; \node (B9) at (5,4){$$}; \node (B10) at (0,4.5){$$}; \node (B11) at (2,5){$$}; \node (B12) at (4,5.5){$$}; 
\node (B13) at (6,6){$$}; \node (B14) at (1,6.5){$$}; 
\node (W1) at (6,0){$$}; \node (W2) at (1,0.5){$$}; \node (W3) at (3,1){$$}; \node (W4) at (5,1.5){$$}; \node (W5) at (0,2){$$}; \node (W6) at (2,2.5){$$}; 
\node (W7) at (4,3){$$}; \node (W8) at (6,3.5){$$}; \node (W9) at (1,4){$$}; \node (W10) at (3,4.5){$$}; \node (W11) at (5,5){$$}; \node (W12) at (0,5.5){$$}; 
\node (W13) at (2,6){$$}; \node (W14) at (4,6.5){$$}; 

\node (Q0) at (4,4.25){{\huge$0$}}; \node (Q1) at (5,2.75){{\huge$1$}}; \node (Q2) at (6,1.25){{\huge$2$}}; \node (Q3) at (0,-0.3){{\huge$3$}}; 
\node (Q3a) at (0,6.7){{\huge$3$}};
\node (Q4) at (1,5.25){{\huge$4$}}; \node (Q5) at (2,3.75){{\huge$5$}}; \node (Q6) at (3,2.25){{\huge$6$}}; \node (Q7) at (4,0.75){{\huge$7$}}; 
\node (Q8) at (5,6.25){{\huge$8$}}; \node (Q9) at (6,4.75){{\huge$9$}}; \node (Q10) at (0,3.25){{\LARGE$10$}}; \node (Q11) at (1,1.75){{\LARGE$11$}}; 
\node (Q12) at (2,0.25){{\LARGE$12$}}; \node (Q13) at (3,5.75){{\LARGE$13$}}; 
\draw[lightgray, line width=0.07cm]  (W2)--(B4)--(W6)--(B7)--(W5)--(B3)--(W2); \draw[lightgray, line width=0.07cm]  (W3)--(B5)--(W7)--(B8)--(W6)--(B4)--(W3);
\draw[lightgray, line width=0.07cm]  (W4)--(B6)--(W8)--(B9)--(W7)--(B5)--(W4); \draw[lightgray, line width=0.07cm]  (W6)--(B8)--(W10)--(B11)--(W9)--(B7)--(W6);
\draw[lightgray, line width=0.07cm]  (W7)--(B9)--(W11)--(B12)--(W10)--(B8)--(W7); \draw[lightgray, line width=0.07cm]  (W9)--(B11)--(W13)--(B14)--(W12)--(B10)--(W9);
\draw[lightgray, line width=0.07cm]  (W3)--(B1); \draw[lightgray, line width=0.07cm]  (W4)--(B2)--(W1); 
\draw[lightgray, line width=0.07cm]  (B12)--(W14); \draw[lightgray, line width=0.07cm]  (W11)--(B13);
\draw[lightgray, line width=0.07cm]  (W2)--(1,-0.3); \draw[lightgray, line width=0.07cm]  (W1)--(6,-0.3); 
\draw[lightgray, line width=0.07cm]  (B13)--(6,6.7); 
\draw[lightgray, line width=0.07cm]  (W1)--(6.5,0.5); \draw[lightgray, line width=0.07cm]  (W8)--(6.5,4);
\draw[lightgray, line width=0.07cm]  (W5)--(-0.5,2.25); \draw[lightgray, line width=0.07cm]  (W12)--(-0.5,5.75); 
\draw[lightgray, line width=0.07cm]  (B3)--(-0.5,0.5); \draw[lightgray, line width=0.07cm]  (B10)--(-0.5,4); 
\draw[lightgray, line width=0.07cm]  (B6)--(6.5,2.25); \draw[lightgray, line width=0.07cm]  (B13)--(6.5,5.75);
\draw[lightgray, line width=0.07cm]  (B1)--(2.7,-0.3); \draw[lightgray, line width=0.07cm]  (B1)--(3.6,-0.3); \draw[lightgray, line width=0.07cm]  (B2)--(4.2,-0.3);
\draw[lightgray, line width=0.07cm]  (W13)--(2.7,6.7); \draw[lightgray, line width=0.07cm]  (W14)--(3.6,6.7); \draw[lightgray, line width=0.07cm]  (W14)--(4.2,6.7);
\filldraw  [ultra thick, draw=lightgray, fill=lightgray] (3,0) circle [radius=0.2] ; \filldraw  [ultra thick, draw=lightgray, fill=lightgray] (5,0.5) circle [radius=0.2] ; 
\filldraw  [ultra thick, draw=lightgray, fill=lightgray] (0,1) circle [radius=0.2] ; \filldraw  [ultra thick, draw=lightgray, fill=lightgray] (2,1.5) circle [radius=0.2] ; 
\filldraw  [ultra thick, draw=lightgray, fill=lightgray] (4,2) circle [radius=0.2] ;
\filldraw  [ultra thick, draw=lightgray, fill=lightgray] (6,2.5) circle [radius=0.2] ; \filldraw  [ultra thick, draw=lightgray, fill=lightgray] (1,3) circle [radius=0.2] ; 
\filldraw  [ultra thick, draw=lightgray, fill=lightgray] (3,3.5) circle [radius=0.2] ;
\filldraw  [ultra thick, draw=lightgray, fill=lightgray] (5,4) circle [radius=0.2] ; \filldraw  [ultra thick, draw=lightgray, fill=lightgray] (0,4.5) circle [radius=0.2] ; 
\filldraw  [ultra thick, draw=lightgray, fill=lightgray] (2,5) circle [radius=0.2] ;
\filldraw  [ultra thick, draw=lightgray, fill=lightgray] (4,5.5) circle [radius=0.2] ; \filldraw  [ultra thick, draw=lightgray, fill=lightgray] (6,6) circle [radius=0.2] ; 
\filldraw  [ultra thick, draw=lightgray, fill=lightgray] (1,6.5) circle [radius=0.2] ;
\draw  [ultra thick,draw=lightgray,fill=white] (6,0) circle [radius=0.2] ; \draw  [ultra thick,draw=lightgray,fill=white] (1,0.5) circle [radius=0.2] ; 
\draw  [ultra thick,draw=lightgray,fill=white] (3,1) circle [radius=0.2] ;
\draw  [ultra thick,draw=lightgray,fill=white] (5,1.5) circle [radius=0.2] ; \draw  [ultra thick,draw=lightgray,fill=white] (0,2) circle [radius=0.2] ; 
\draw  [ultra thick,draw=lightgray,fill=white] (2,2.5) circle [radius=0.2] ; 
\draw  [ultra thick,draw=lightgray,fill=white] (4,3) circle [radius=0.2] ; \draw  [ultra thick,draw=lightgray,fill=white] (6,3.5) circle [radius=0.2] ; 
\draw  [ultra thick,draw=lightgray,fill=white] (1,4) circle [radius=0.2] ;
\draw  [ultra thick,draw=lightgray,fill=white] (3,4.5) circle [radius=0.2] ; \draw  [ultra thick,draw=lightgray,fill=white] (5,5) circle [radius=0.2] ; 
\draw  [ultra thick,draw=lightgray,fill=white] (0,5.5) circle [radius=0.2] ;
\draw  [ultra thick,draw=lightgray,fill=white] (2,6) circle [radius=0.2] ; \draw  [ultra thick,draw=lightgray,fill=white] (4,6.5) circle [radius=0.2] ;

\draw[->, line width=0.07cm] (Q0)--(Q1); \draw[->, line width=0.07cm] (Q1)--(Q2); \draw[->, line width=0.07cm] (Q4)--(Q5); 
\draw[->, line width=0.07cm] (Q5)--(Q6); \draw[->, line width=0.07cm] (Q6)--(Q7); \draw[->, line width=0.07cm] (Q13)--(Q0);
\draw[->, line width=0.07cm] (Q8)--(Q9); \draw[->, line width=0.07cm] (Q10)--(Q11); \draw[->, line width=0.07cm] (Q11)--(Q12);
\draw[->, line width=0.07cm] (Q8)--(Q13); \draw[->, line width=0.07cm] (Q13)--(Q4); \draw[->, line width=0.07cm] (Q9)--(Q0); 
\draw[->, line width=0.07cm] (Q0)--(Q5); \draw[->, line width=0.07cm] (Q5)--(Q10); \draw[->, line width=0.07cm] (Q1)--(Q6); 
\draw[->, line width=0.07cm] (Q6)--(Q11); \draw[->, line width=0.07cm] (Q2)--(Q7); \draw[->, line width=0.07cm] (Q7)--(Q12); 
\draw[->, line width=0.07cm] (Q12)--(Q3); \draw[->, line width=0.07cm] (Q3)--(Q11); \draw[->, line width=0.07cm] (Q11)--(Q5); 
\draw[->, line width=0.07cm] (Q5)--(Q13); \draw[->, line width=0.07cm] (Q12)--(Q6); \draw[->, line width=0.07cm] (Q6)--(Q0); 
\draw[->, line width=0.07cm] (Q0)--(Q8); \draw[->, line width=0.07cm] (Q10)--(Q4); \draw[->, line width=0.07cm] (Q7)--(Q1);
\draw[->, line width=0.07cm] (Q1)--(Q9);
\draw[->, line width=0.07cm] (6.5,1.375)--(Q2); \draw[->, line width=0.07cm] (Q11)--(-0.5,1.375); 
\draw[->, line width=0.07cm] (6.5,6.625)--(Q8); \draw[->, line width=0.07cm] (Q4)--(-0.5,4.875);
\draw[->, line width=0.07cm] (6.5,4.875)--(Q9); \draw[->, line width=0.07cm] (6.5,3.125)--(Q1); \draw[->, line width=0.07cm] (Q10)--(-0.5,3.125);
\draw[->, line width=0.07cm] (Q12)--(2.3666,-0.3); \draw[->, line width=0.07cm] (2.3666,6.7)--(Q13);
\draw[->, line width=0.07cm] (Q7)--(4.7,-0.3); \draw[->, line width=0.07cm] (4.7,6.7)--(Q8); \draw[->, line width=0.07cm] (Q3a)--(Q4);
\draw[->, line width=0.07cm] (Q9)--(6.5,4); \draw[->, line width=0.07cm] (-0.5,4)--(Q10); \draw[->, line width=0.07cm] (Q2)--(6.5,0.5); 
\draw[->, line width=0.07cm] (-0.5,0.5)--(Q3); \draw[->, line width=0.07cm] (Q4)--(1.725,6.7); \draw[->, line width=0.07cm] (1.725,-0.3)--(Q12);
\draw[->, line width=0.07cm] (3.475,-0.3)--(Q7); \draw[->, line width=0.07cm] (Q13)--(3.475,6.7); 
\draw[->, line width=0.07cm] (5.225,-0.3)--(Q2); \draw[->, line width=0.07cm] (Q2)--(6.5,2.25); \draw[->, line width=0.07cm] (-0.5,2.25)--(Q10);
\draw[->, line width=0.07cm] (Q9)--(6.5,5.75); \draw[->, line width=0.07cm] (-0.5,5.75)--(Q3a); 

\draw[ultra thick]  (-0.5,-0.3) rectangle (6.5,6.7);
\end{tikzpicture}
} }; 
\end{tikzpicture}
\end{center}

On the other hand, the following consistent dimer model is not homotopy equivalent to a regular hexagonal dimer model, 
and it gives the toric singularity defined by the cone $\sigma$: 
\[ \sigma=\mathrm{Cone}\{(1,1,1), (-1,1,1), (-1,-1,1), (1,-1,1) \}. \]

\begin{center}
\begin{tikzpicture}
\node (DM) at (0,0) 
{\scalebox{0.448}{
\begin{tikzpicture}
\node (B1) at (0.5,1.5){$$}; \node (B2) at (2.5,1.5){$$}; \node (B3) at (4.5,0.5){$$};  \node (B4) at (4.5,2.5){$$};
\node (B5) at (1.5,3.5){$$};  \node (B6) at (1.5,5.5){$$}; \node (B7) at (3.5,4.5){$$};  \node (B8) at (5.5,4.5){$$};
\node (W1) at (1.5,0.5){$$}; \node (W2) at (1.5,2.5){$$}; \node (W3) at (3.5,1.5){$$};  \node (W4) at (5.5,1.5){$$};
\node (W5) at (0.5,4.5){$$}; \node (W6) at (2.5,4.5){$$}; \node (W7) at (4.5,3.5){$$}; \node (W8) at (4.5,5.5){$$};
\draw[ultra thick]  (0,0) rectangle (6,6);
\draw[line width=0.06cm] (B1)--(W1); \draw[line width=0.06cm] (B1)--(W2); \draw[line width=0.06cm] (B2)--(W1); 
\draw[line width=0.06cm] (B2)--(W2); \draw[line width=0.06cm] (B2)--(W3); \draw[line width=0.06cm] (B3)--(W3);
\draw[line width=0.06cm] (B3)--(W4); \draw[line width=0.06cm] (B4)--(W3); \draw[line width=0.06cm] (B4)--(W4); 
\draw[line width=0.06cm] (B4)--(W7); \draw[line width=0.06cm] (B5)--(W2); \draw[line width=0.06cm] (B5)--(W5); 
\draw[line width=0.06cm] (B5)--(W6); \draw[line width=0.06cm] (B6)--(W5); \draw[line width=0.06cm] (B6)--(W6); 
\draw[line width=0.06cm] (B7)--(W6); \draw[line width=0.06cm] (B7)--(W7); \draw[line width=0.06cm] (B7)--(W8); 
\draw[line width=0.06cm] (B8)--(W7); \draw[line width=0.06cm] (B8)--(W8); 
\draw[line width=0.06cm] (B1)--(0,1.5); \draw[line width=0.06cm] (W4)--(6,1.5); \draw[line width=0.06cm] (B6)--(1.5,6); 
\draw[line width=0.06cm] (W1)--(1.5,0); \draw[line width=0.06cm] (B3)--(4.5,0); \draw[line width=0.06cm] (W8)--(4.5,6); 
\draw[line width=0.06cm] (B8)--(6,4.5); \draw[line width=0.06cm] (W5)--(0,4.5); 

\filldraw  [ultra thick, fill=black] (0.5,1.5) circle [radius=0.2] ;\filldraw  [ultra thick, fill=black] (2.5,1.5) circle [radius=0.2] ;
\filldraw  [ultra thick, fill=black] (4.5,0.5) circle [radius=0.2] ;\filldraw  [ultra thick, fill=black] (4.5,2.5) circle [radius=0.2] ;
\filldraw  [ultra thick, fill=black] (1.5,3.5) circle [radius=0.2] ;\filldraw  [ultra thick, fill=black] (1.5,5.5) circle [radius=0.2] ;
\filldraw  [ultra thick, fill=black] (3.5,4.5) circle [radius=0.2] ;\filldraw  [ultra thick, fill=black] (5.5,4.5) circle [radius=0.2] ;
\draw  [ultra thick,fill=white] (1.5,0.5) circle [radius=0.2] ;\draw  [ultra thick, fill=white] (1.5,2.5) circle [radius=0.2] ;
\draw  [ultra thick,fill=white] (3.5,1.5)circle [radius=0.2] ;\draw  [ultra thick, fill=white] (5.5,1.5) circle [radius=0.2] ;
\draw  [ultra thick,fill=white] (0.5,4.5)circle [radius=0.2] ;\draw  [ultra thick, fill=white] (2.5,4.5) circle [radius=0.2] ; 
\draw  [ultra thick,fill=white] (4.5,3.5)circle [radius=0.2] ;\draw  [ultra thick, fill=white] (4.5,5.5) circle [radius=0.2] ;
\end{tikzpicture}
} }; 

\node (QV) at (6,0) 
{\scalebox{0.448}{
\begin{tikzpicture}
\node (B1) at (0.5,1.5){$$}; \node (B2) at (2.5,1.5){$$}; \node (B3) at (4.5,0.5){$$};  \node (B4) at (4.5,2.5){$$};
\node (B5) at (1.5,3.5){$$};  \node (B6) at (1.5,5.5){$$}; \node (B7) at (3.5,4.5){$$};  \node (B8) at (5.5,4.5){$$};
\node (W1) at (1.5,0.5){$$}; \node (W2) at (1.5,2.5){$$}; \node (W3) at (3.5,1.5){$$};  \node (W4) at (5.5,1.5){$$};
\node (W5) at (0.5,4.5){$$}; \node (W6) at (2.5,4.5){$$}; \node (W7) at (4.5,3.5){$$}; \node (W8) at (4.5,5.5){$$};

\node (Q0) at (4.5,1.5){{\LARGE$1$}}; \node (Q1a) at (0,0){{\LARGE$3$}}; \node (Q1b) at (6,0){{\LARGE$3$}}; 
\node (Q1c) at (6,6){{\LARGE$3$}}; \node (Q1d) at (0,6){{\LARGE$3$}}; \node (Q2) at (1.5,1.5){{\LARGE$4$}}; 
\node (Q3a) at (0,3){{\LARGE$5$}}; \node (Q3b) at (6,3){{\LARGE$5$}}; \node (Q4) at (1.5,4.5){{\LARGE$6$}};
\node (Q5) at (3,3){{\LARGE$0$}}; \node (Q6) at (4.5,4.5){{\LARGE$7$}}; \node (Q7a) at (3,0){{\LARGE$2$}}; \node (Q7b) at (3,6){{\LARGE$2$}};

\draw[lightgray, line width=0.06cm] (B1)--(W1); \draw[lightgray, line width=0.06cm] (B1)--(W2); \draw[lightgray, line width=0.06cm] (B2)--(W1); 
\draw[lightgray, line width=0.06cm] (B2)--(W2); \draw[lightgray, line width=0.06cm] (B2)--(W3); \draw[lightgray, line width=0.06cm] (B3)--(W3);
\draw[lightgray, line width=0.06cm] (B3)--(W4); \draw[lightgray, line width=0.06cm] (B4)--(W3); \draw[lightgray, line width=0.06cm] (B4)--(W4); 
\draw[lightgray, line width=0.06cm] (B4)--(W7); \draw[lightgray, line width=0.06cm] (B5)--(W2); \draw[lightgray, line width=0.06cm] (B5)--(W5); 
\draw[lightgray, line width=0.06cm] (B5)--(W6); \draw[lightgray, line width=0.06cm] (B6)--(W5); \draw[lightgray, line width=0.06cm] (B6)--(W6); 
\draw[lightgray, line width=0.06cm] (B7)--(W6); \draw[lightgray, line width=0.06cm] (B7)--(W7); \draw[lightgray, line width=0.06cm] (B7)--(W8); 
\draw[lightgray, line width=0.06cm] (B8)--(W7); \draw[lightgray, line width=0.06cm] (B8)--(W8); 
\draw[lightgray, line width=0.06cm] (B1)--(0,1.5); \draw[lightgray, line width=0.06cm] (W4)--(6,1.5); \draw[lightgray, line width=0.06cm] (B6)--(1.5,6); 
\draw[lightgray, line width=0.06cm] (W1)--(1.5,0); \draw[lightgray, line width=0.06cm] (B3)--(4.5,0); \draw[lightgray, line width=0.06cm] (W8)--(4.5,6); 
\draw[lightgray, line width=0.06cm] (B8)--(6,4.5); \draw[lightgray, line width=0.06cm] (W5)--(0,4.5); 
\filldraw  [ultra thick, draw=lightgray, fill=lightgray] (0.5,1.5) circle [radius=0.2] ;\filldraw  [ultra thick, draw=lightgray, fill=lightgray] (2.5,1.5) circle [radius=0.2] ;
\filldraw  [ultra thick, draw=lightgray, fill=lightgray] (4.5,0.5) circle [radius=0.2] ;\filldraw  [ultra thick, draw=lightgray, fill=lightgray] (4.5,2.5) circle [radius=0.2] ;
\filldraw  [ultra thick, draw=lightgray, fill=lightgray] (1.5,3.5) circle [radius=0.2] ;\filldraw  [ultra thick, draw=lightgray, fill=lightgray] (1.5,5.5) circle [radius=0.2] ;
\filldraw  [ultra thick, draw=lightgray, fill=lightgray] (3.5,4.5) circle [radius=0.2] ;\filldraw  [ultra thick, draw=lightgray, fill=lightgray] (5.5,4.5) circle [radius=0.2] ;
\draw  [ultra thick,draw=lightgray,fill=white] (1.5,0.5) circle [radius=0.2] ;\draw  [ultra thick, draw=lightgray,fill=white] (1.5,2.5) circle [radius=0.2] ;
\draw  [ultra thick,draw=lightgray,fill=white] (3.5,1.5)circle [radius=0.2] ;\draw  [ultra thick, draw=lightgray,fill=white] (5.5,1.5) circle [radius=0.2] ;
\draw  [ultra thick,draw=lightgray,fill=white] (0.5,4.5)circle [radius=0.2] ;\draw  [ultra thick, draw=lightgray,fill=white] (2.5,4.5) circle [radius=0.2] ; 
\draw  [ultra thick,draw=lightgray,fill=white] (4.5,3.5)circle [radius=0.2] ;\draw  [ultra thick, draw=lightgray,fill=white] (4.5,5.5) circle [radius=0.2] ;
\draw[->, line width=0.07cm] (Q0)--(Q3b); \draw[->, line width=0.07cm] (Q0)--(Q7a); \draw[->, line width=0.07cm] (Q1a)--(Q2); 
\draw[->, line width=0.07cm] (Q1b)--(Q0); \draw[->, line width=0.07cm] (Q1c)--(Q6); \draw[->, line width=0.07cm] (Q1d)--(Q4); 
\draw[->, line width=0.07cm] (Q2)--(Q3a); \draw[->, line width=0.07cm] (Q2)--(Q7a); \draw[->, line width=0.07cm] (Q3a)--(Q5); 
\draw[->, line width=0.07cm] (Q3a)--(Q1a); \draw[->, line width=0.07cm] (Q3a)--(Q1d); \draw[->, line width=0.07cm] (Q3b)--(Q5); 
\draw[->, line width=0.07cm] (Q3b)--(Q1b); \draw[->, line width=0.07cm] (Q3b)--(Q1c); 
\draw[->, line width=0.07cm] (Q4)--(Q3a); \draw[->, line width=0.07cm] (Q4)--(Q7b); \draw[->, line width=0.07cm] (Q5)--(Q0); 
\draw[->, line width=0.07cm] (Q5)--(Q2); \draw[->, line width=0.07cm] (Q5)--(Q4); \draw[->, line width=0.07cm] (Q5)--(Q6); 
\draw[->, line width=0.07cm] (Q6)--(Q3b); \draw[->, line width=0.07cm] (Q6)--(Q7b); \draw[->, line width=0.07cm] (Q7a)--(Q1a); 
\draw[->, line width=0.07cm] (Q7a)--(Q1b); \draw[->, line width=0.07cm] (Q7a)--(Q5); \draw[->, line width=0.07cm] (Q7b)--(Q1c); 
\draw[->, line width=0.07cm] (Q7b)--(Q1d); \draw[->, line width=0.07cm] (Q7b)--(Q5); 
\end{tikzpicture}
} }; 
\end{tikzpicture}
\end{center}
\end{example}

\medskip 

In what follows, we will investigate properties of steady NCCRs, and those of splitting NCCRs in Section~\ref{sec_steadyNCCR}. 
In Section~\ref{sec_main}, we will give a proof of Theorem~\ref{main_intro} in a more detailed form. 

\subsection*{Notations and Conventions} 
Let $R$ be a commutative Noetherian ring. 
We denote $\Mod R$ to be the category of $R$-modules, $\mod R$ to be the category of finitely generated $R$-modules, 
$\add_RM$ to be the full subcategory consisting of direct summands of finite direct sums of some copies of $M\in \mod R$. 
When $R$ is $G$-graded for an abelian group $G$, we denote by $\Mod^GR$ the category of $G$-graded $R$-module, 
and $\mod^GR$ the category of finitely generated $G$-graded $R$-module.
We say an $R$-module $M=M_1\oplus\cdots\oplus M_n$ is \emph{basic} if $M_i$'s are mutually non-isomorphic. 
Also, we denote by $\Cl(R)$ the class group of $R$. 
When we consider a rank one reflexive $R$-module $I$ as an element of $\Cl(R)$, we denote it by $[I]$. 

\subsection*{Acknowledgements}
The authors thank Ragnar-Olaf Buchweitz for stimulating discussions.
The first author thanks Akira Ishii and Kazushi Ueda for kind explanations on dimer models. 
The first author was partially supported by JSPS Grant-in-Aid for Scientific Research (B) 16H03923, (C) 23540045 and (S) 15H05738. 
The second author was supported by Grant-in-Aid for JSPS Fellows No. 26--422. 
Both authors thank the anonymous referee for valuable comments on the paper. 

\section{Steady NCCRs and Splitting NCCRs} 
\label{sec_steadyNCCR}

\subsection{Preliminaries}  
We start with preparing basic facts used in this paper.
We denote the $R$-dual functor by 
\[(-)^*:=\Hom_R(-, R) : \mod R\rightarrow \mod R.\] 
We say that $M\in \mod R$ is \emph{reflexive} if the natural morphism $M\rightarrow M^{**}$ is an isomorphism. 
We denote $\refl R$ to be the category of reflexive $R$-modules. 

Let $(R, \mm)$ be a commutative Noetherian local ring. 
For $M\in\mod R$, we define the depth of $M$ as 
\[
\depth_RM:=\mathrm{inf} \{ i\ge0 \mid \Ext^i_R(R/\mm, M)\neq 0\}. 
\]
We say $M$ is a \emph{maximal Cohen-Macaulay} (or \emph{CM} for short) $R$-module if $\depth_RM={\rm dim}R$. 
When $R$ is non-local, we say $M$ is a CM module if $M_\pp$ is a CM module for all $\pp\in\Spec R$. 
Furthermore, we say that $R$ is a \emph{Cohen-Macaulay ring} (= \emph{CM ring}) if $R$ is a CM $R$-module. 
We denote $\CM R$ to be the category of Cohen-Macaulay $R$-modules. 

\medskip  

The following is well-known. For example, see \cite[II. 2.1]{ARS}. 

\begin{lemma}
\label{projectivization}
Let $R$ be a commutative ring and $M\in\mod R$. Suppose $M$ is a generator. Then  
the functor $\Hom_R(M,-):\mod R\rightarrow \mod \End_R(M)$ is fully faithful, restricting to an equivalence 
$\add_RM\simeq\proj\End_R(M)$.
\end{lemma}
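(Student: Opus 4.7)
The plan is to set $E:=\End_R(M)$ and $F:=\Hom_R(M,-):\mod R\to\mod E$, and first prove the equivalence on $\add_RM$, then bootstrap full faithfulness to all of $\mod R$ using the generator hypothesis.

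First I would handle the equivalence $\add_RM\simeq\proj E$. The identity observation is $F(M)=E$, the free $E$-module of rank one, so additivity of $F$ gives $F(\add_RM)\subseteq\proj E$. For $P,P'\in\add_RM$, the natural comparison map $\Hom_R(P,P')\to\Hom_E(F(P),F(P'))$ is tautologically an isomorphism when $P=P'=M$, and by additivity in both variables it is an isomorphism for all $P,P'\in\add_RM$. Essential surjectivity onto $\proj E$ is then immediate: any finitely generated projective $E$-module is a summand of some $E^n=F(M^n)$, and idempotents transfer back to $\add_RM$ since $F$ is fully faithful there.

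Next I would extend full faithfulness to arbitrary $N,N'\in\mod R$. The key preparatory step is to build, for any $N\in\mod R$, a presentation
\[
M^{n_1}\longrightarrow M^{n_0}\longrightarrow N\longrightarrow 0
\]
by objects of $\add_RM$ whose image under $F$ is still exact. Because $R$ is commutative Noetherian and $M,N$ are finitely generated, $F(N)=\Hom_R(M,N)$ is finitely generated over $R$, hence finitely generated over $E$. Choose $E$-generators $\phi_1,\dots,\phi_{n_0}\colon M\to N$ of $F(N)$; the induced map $M^{n_0}\to N$ is surjective (as $M$ is a generator), and by construction the map $F(M^{n_0})=E^{n_0}\to F(N)$ is surjective. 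Iterating this on the kernel yields the desired presentation with $F(M^{n_1})\to F(M^{n_0})\to F(N)\to 0$ exact. Applying $\Hom_R(-,N')$ and $\Hom_E(F(-),F(N'))$ to this presentation produces a ladder
\[
\begin{CD}
0 @>>> \Hom_R(N,N') @>>> \Hom_R(M^{n_0},N') @>>> \Hom_R(M^{n_1},N')\\
@. @VVV @VV\wr V @VV\wr V\\
0 @>>> \Hom_E(F(N),F(N')) @>>> \Hom_E(F(M^{n_0}),F(N')) @>>> \Hom_E(F(M^{n_1}),F(N'))
\end{CD}
\]
with exact rows, whose middle and right vertical maps are isomorphisms by the previous paragraph. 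The five lemma forces the leftmost vertical map to be an isomorphism as well, completing the proof.

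The main obstacle is arranging a presentation of $N$ by objects of $\add_RM$ that remains exact after applying the left exact functor $F$; naive presentations need not have this property. The generator hypothesis intervenes twice here, once to make $M^{n_0}\to N$ itself surjective and once, via the finite generation of $F(N)$ as an $E$-module, to guarantee that the lifted map $F(M^{n_0})\to F(N)$ is surjective. Once this exactness is secured, the five-lemma step is purely formal.
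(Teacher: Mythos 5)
Your proof is correct and is essentially the standard projectivization argument given in [ARS, II.2.1], the reference the paper cites in lieu of its own proof. The careful choice of $E$-generators $\phi_1,\dots,\phi_{n_0}$ of $\Hom_R(M,N)$, ensuring simultaneously that $M^{n_0}\to N$ is surjective and that $\Hom_R(M,-)$ remains exact on the resulting $\add_RM$-presentation, is exactly the right move and the one the reference uses. (Your appeal to Noetherianity to keep $\Hom_R(M,N)$ finitely generated is justified by the paper's blanket conventions, even though the lemma statement itself only says ``commutative ring.'')
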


Also, the following property of modules giving NCCRs is important.

\begin{proposition}(see \cite[4.5]{IW2})
\label{maximal_NCCR}
Let $R$ be a $d$-dimensional normal CM local ring. 
Suppose $M\in\refl R$ gives an NCCR of $R$. Then for any $X\in \refl R$ with $\End_R(M\oplus X)\in\CM R$, 
we have $X\in\add_RM$. 
\end{proposition}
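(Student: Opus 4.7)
Let $\Lambda := \End_R(M)$, which is a non-singular $R$-order by the NCCR hypothesis. By the equivalent conditions for non-singular orders (\cite[2.17]{IW2}), a $\Lambda$-module is projective if and only if it is maximal Cohen--Macaulay as an $R$-module. As $R$-modules we have the decomposition
\[
\End_R(M\oplus X) = \End_R M \oplus \Hom_R(M,X) \oplus \Hom_R(X,M) \oplus \End_R X,
\]
so the hypothesis $\End_R(M\oplus X)\in\CM R$ forces $\Hom_R(M,X)\in\CM R$. Viewed as a right $\Lambda$-module, $\Hom_R(M,X)$ is therefore projective.

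By projectivization, the functor $\Hom_R(M,-):\add_R M\xrightarrow{\sim}\proj\Lambda$ is an equivalence, so there exists $Y\in\add_R M$ with $\Hom_R(M,X)\cong\Hom_R(M,Y)$ as right $\Lambda$-modules. To conclude $X\cong Y$, my plan is to show that $\Hom_R(M,-)$ is fully faithful on $\refl R$: namely, that for any $A,B\in\refl R$ the natural map
\[
\alpha_{A,B}\colon\Hom_R(A,B)\longrightarrow\Hom_\Lambda\bigl(\Hom_R(M,A),\Hom_R(M,B)\bigr)
\]
is an isomorphism. Both sides are reflexive $R$-modules (over the normal domain $R$, $\Hom_R$ of reflexives is reflexive, and a $\Lambda$-invariant submodule of a reflexive is a kernel of maps between reflexives, hence reflexive). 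At each height-$1$ prime $\pp$, the reflexive module $M_\pp$ is free over the DVR $R_\pp$, so $\Lambda_\pp$ is a matrix algebra over $R_\pp$ and Morita equivalence gives that $(\alpha_{A,B})_\pp$ is an isomorphism. A map of reflexive $R$-modules over a normal domain which is an isomorphism at every height-$1$ prime is itself an isomorphism (its generic inverse lies in every height-$1$ localization of the source, hence in the source by the $S_2$-condition). So $\alpha_{A,B}$ is an iso. Applying this to lift the given iso $\Hom_R(M,X)\cong\Hom_R(M,Y)$ produces $f\colon X\to Y$ whose localization $f_\pp$ corresponds under Morita to an iso of $\Lambda_\pp$-modules, so $f_\pp$ is iso at every height-$1$ prime; the same reflexivity argument then gives that $f$ itself is an iso, so $X\cong Y\in\add_R M$.

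The main obstacle is the fully faithfulness claim, which packages two ingredients that are individually standard but need to be combined carefully: Morita equivalence at height-$1$ primes, available because $M$ is reflexive and so free over DVRs of height $1$, and the fact that maps of reflexive modules over a normal domain are determined and detected to be iso by their restrictions to height-$1$ primes. Once fully faithfulness is in place, the statement reduces formally to the characterization of projective $\Lambda$-modules as CM $\Lambda$-modules for a non-singular order.
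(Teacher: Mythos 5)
Your proof is correct and reconstructs the argument that underlies the citation; the paper itself only points to \cite[4.5]{IW2} and gives no proof of its own. The chain you run---$\Hom_R(M,X)$ is CM, hence projective over the non-singular order $\Lambda=\End_R(M)$ by the characterization in \cite[2.17]{IW2}, hence isomorphic to $\Hom_R(M,Y)$ for some $Y\in\add_RM$ by projectivization, hence $X\cong Y$ by the reflexive equivalence---is exactly the standard route, and your verification that $\Hom_R(M,-)$ is fully faithful on $\refl R$ (both sides reflexive, isomorphism in codimension one via Morita over the DVRs $R_\pp$, detection of isomorphisms among reflexives by the $S_2$-property) is sound. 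One small streamlining for the last step: once full faithfulness is in hand you can also lift the inverse isomorphism $\Hom_R(M,Y)\to\Hom_R(M,X)$ to some $g\colon Y\to X$, and then $gf=\mathrm{id}_X$ and $fg=\mathrm{id}_Y$ follow from faithfulness applied to the composites, so $X\cong Y$ without re-running the localization argument on $f$.
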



\subsection{Examples}

We provide examples of steady NCCRs and those of splitting NCCRs.

\begin{example}
\label{steady_example} 
Let $S=k[[x_1,\cdots,x_d]]$ be a formal power series ring, $G$ be a finite subgroup of $\GL(d, k)$ such that $|G|$ is invertible in $k$. 
Let $R:=S^G$. 
\begin{enumerate}[\rm(a)]
\setlength{\parskip}{0pt} 
\setlength{\itemsep}{0pt}
\item The $R$-module $S$ gives a steady NCCR. 
\item Assume that $G$ is small (i.e. it contains no pseudo-reflections except the identity).
Let $\{V_i\mid i\in I\}$ be the set of simple $kG$-modules. Then we have
$S\simeq\bigoplus_{i\in I}((S\otimes_{k}V_i)^G)^{\oplus\dim_kV_i}$ as $R$-modules,
where $(S\otimes_{k}V_i)^G$ is an indecomposable CM $R$-module with rank $\dim_kV_i$.
\end{enumerate}
\end{example} 

\begin{proof}
(i) We prove the statements under the assumption that $G$ is small. 

In this case, we have $\End_R(S)\simeq S*G$, where $S*G$ is the skew group ring \cite{Aus1} (see also \cite[4.2]{IT}, \cite[5.12]{LW}). 
Since $S*G$ is a non-singular $R$-order (see e.g. \cite[2.12]{IW2}) and $\End_R(S)\in\add_R S$ holds, we have the assertion (a).

We have $S=(S\otimes_{k}kG)^G=(S\otimes_{k}(\bigoplus_{i\in I}V_i^{\oplus\dim_kV_i}))^G=\bigoplus_{i\in I}((S\otimes_{k}V_i)^G)^{\oplus\dim_kV_i}$ as $R$-modules.
This is a decomposition into indecomposable $R$-modules since $\End_R(S)/{\rm rad} \End_R(S)\simeq S*G/{\rm rad}(S*G)=kG$ holds. Thus we have the assertion (b).

(ii) We prove the statement (a) for general case. In this case, there exists a formal power series ring  $T$ and a finite small subgroup $G^\prime\subset\GL(d,k)$ 
such that $R\subset T\subset S$, $R=T^{G^\prime}$, and $S$ is a free $T$-module of finite rank (see \cite[Proof of 5.7]{IW2}). 
Suppose $S\simeq T^{\oplus r}$. Then $\End_R(S)=M_r(\End_R(T))$ belongs to $\add_R T=\add_R S$. 
Therefore, the assertion follows from (i).
\end{proof}

Also, we provide Examples of splitting NCCRs. 

\begin{example}
\label{splitting_example}
\begin{enumerate}[\rm(a)]
\setlength{\parskip}{0pt} 
\setlength{\itemsep}{0pt}
\item Let $R=S^G$ be a quotient singularity by a finite abelian group $G\subset\GL(d, k)$ such that $|G|$ is invertible in $k$. In this situation, $S$ gives a steady splitting NCCR of $R$ by Example~\ref{steady_example}. 
\item Let $R$ be a Gorenstein toric singularity in dimension three. Then the method of dimer models gives splitting NCCRs of $R$ (see e.g. \cite{Bro, IU, Boc1}). 
Conversely any splitting NCCR of $R$ is given by a consistent dimer model \cite{Boc3}. 

\item Let $R=k[[x,y,u,v]]/(f(x,y)-uv)$ be a $cA_n$ singularity. If $f$ is a product $f_1\cdots f_m$ with $f_i\not\in (x,y)^2$, 
then $R$ has a splitting NCCR \cite{IW4}. 
\end{enumerate}
\end{example}

\subsection{Basic properties}
We show some basic properties of steady NCCRs and splitting NCCRs. 

\begin{lemma}
\label{basic_pro}
Let $M$ be a steady $R$-module.
\begin{enumerate}[\rm(a)]
\setlength{\parskip}{0pt} 
\setlength{\itemsep}{0pt}
\item $\add_RM=\add_R\End_R(M)$.
\item $M^*$ is a steady $R$-module and satisfies $\add_RM=\add_RM^*$.
\item If $M$ gives an NCCR, then $M\in\CM R$.
\end{enumerate}
\end{lemma}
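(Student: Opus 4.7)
The plan is to deduce all three statements from two tautological inputs: by steadiness, $R\in\add_R M$ and $\End_R(M)\in\add_R M$; and by reflexivity, the functor $(-)^*$ restricts to a contravariant $R$-linear self-equivalence on $\refl R$.

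For (a), the inclusion $\add_R\End_R(M)\subseteq\add_R M$ is immediate from the definition of steadiness. For the reverse, I would fix $n\ge 1$ and a decomposition $M^n\simeq R\oplus X$, and then compute
$$\End_R(M)^n\simeq\Hom_R(M^n,M)\simeq\Hom_R(R\oplus X,M)\simeq M\oplus\Hom_R(X,M),$$
which displays $M$ as a direct summand of $\End_R(M)^n$. The same trick applied to $\Hom_R(M,-)$ shows that $M^*=\Hom_R(M,R)$ is a direct summand of $\End_R(M)^n$, which I will reuse in (b).

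For (b), combining $M^*\in\add_R\End_R(M)$ with (a) gives $M^*\in\add_R M$. Applying $(-)^*$ and using $M^{**}\simeq M$ yields $M\in\add_R M^*$, hence $\add_R M=\add_R M^*$; in particular $R=R^*\in\add_R M^*$. Finally, the duality furnishes a natural $R$-module isomorphism $\End_R(M)\simeq\End_R(M^*)$ (via $f\mapsto f^*$), so $\End_R(M^*)\in\add_R M=\add_R M^*$, confirming $M^*$ is steady. For (c), if $M$ gives an NCCR then $\End_R(M)$ is an $R$-order and thus lies in $\CM R$; by (a), $M\in\add_R\End_R(M)$ is a direct summand of a CM module, hence itself CM.

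None of the steps is really delicate. The one point I would state with care is the isomorphism $\End_R(M)\simeq\End_R(M^*)$ used in (b): the two algebras are \emph{opposite} as rings, but the identification is an honest $R$-module isomorphism, which is all that is needed to transfer $\add_R$-membership.
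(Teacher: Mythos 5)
Your proof is correct and follows essentially the same route as the paper's: in each part the key step is to exhibit $M$, $M^*$, or $M^{**}$ as a direct summand of $\Hom_R(M^n,M)$ or $\Hom_R(M^n,R)$ using $R\in\add_R M$, which is exactly what the paper does (more tersely) via $M\simeq\Hom_R(R,M)$ and $M\simeq\Hom_R(M^*,R)$. The only difference is that you explicitly verify the steadiness of $M^*$ (generator and $\End_R(M^*)\in\add_R M^*$), which the paper leaves implicit after proving $\add_R M=\add_R M^*$; your remark about $f\mapsto f^*$ being an $R$-module isomorphism $\End_R(M)\simeq\End_R(M^*)$ despite being a ring anti-isomorphism is exactly the right thing to be careful about.
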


\begin{proof}
(a) Since $M$ is a generator, we have $M\simeq\Hom_R(R,M)\in\add_R\End_R(M)$.

(b) We have $M^*\simeq\Hom_R(M,R)\in\add_R\End_R(M)=\add_RM$ by (a).
Similarly we have $M\simeq\Hom_R(M^*,R)\in\add_R\Hom_R(M,R)=\add_RM^*$.

(c) This is clear since $M\simeq\Hom_R(R,M)\in\add_R\End_R(M)\subset\CM R$.
\end{proof}

\begin{lemma}
\label{prop_steady}
Let $R$ be a CM normal domain. Suppose $M$ satisfies $\End_R(M)\in\add_R M$. 
Then $M$ is a generator if one of the following conditions is satisfied. 
  \begin{itemize}
  \setlength{\parskip}{0pt} 
  \setlength{\itemsep}{0pt}
   \item $R$ contains a field of characteristic zero, 
   \item $M$ has a rank one reflexive module as a direct summand. 
  \end{itemize}
In particular, if $M$ is splitting, then $M$ is a generator. 
\end{lemma}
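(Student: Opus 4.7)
The plan is, under either hypothesis, to exhibit $R$ as an $R$-module direct summand of $\End_R(M)$. Combined with the assumption $\End_R(M)\in\add_RM$, this immediately yields $R\in\add_RM$, i.e.\ $M$ is a generator.

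For the second bullet, write $M=L\oplus M'$ with $L$ a rank one reflexive summand. The natural block decomposition
\[
\End_R(M)=\End_R(L)\oplus\Hom_R(L,M')\oplus\Hom_R(M',L)\oplus\End_R(M')
\]
displays $\End_R(L)$ as a direct summand of $\End_R(M)$, so it suffices to show $\End_R(L)\simeq R$. Represent $L$ by a divisorial ideal $I\subseteq R$ via its class in $\Cl(R)$; then $\End_R(I)=\{a\in K\mid aI\subseteq I\}$, where $K$ is the fraction field of $R$. Any such $a$ is integral over $R$ by the determinantal trick applied to a finite generating set of $I$, and so lies in $R$ by normality. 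Hence $\End_R(L)=R$.

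For the first bullet, I would invoke the classical reduced trace. For a finitely generated torsion-free $R$-module $M$ of generic rank $r=\rank_R M$, one constructs $\mathrm{tr}\colon\End_R(M)\to R$ by localization at height-one primes: at each such $\pp$, $R_\pp$ is a DVR and $M_\pp$ is free of rank $r$, so matrix trace gives $\End_{R_\pp}(M_\pp)\to R_\pp$; the normality identity $R=\bigcap_{\mathrm{ht}\,\pp=1}R_\pp$ forces these local maps to glue into $R$. The composition $R\hookrightarrow\End_R(M)\xrightarrow{\mathrm{tr}}R$, $a\mapsto a\cdot\mathrm{id}_M$, equals multiplication by $r$. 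Since $R$ contains a field of characteristic zero, the positive integer $r$ is a unit in $R$, so $r^{-1}\mathrm{tr}$ retracts the inclusion, exhibiting $R$ as a direct summand of $\End_R(M)$.

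The ``in particular'' clause follows at once from the second bullet, since a nonzero splitting module is by definition a direct sum of rank one reflexive modules and so has one as a summand. The only mildly delicate step is setting up the trace map in the first case; this is standard for torsion-free modules over normal domains, so I do not expect any real obstacle.
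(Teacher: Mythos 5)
Your proof is correct and follows essentially the same route as the paper's: for the first bullet the paper cites Auslander's trace-splitting result \cite[5.6]{Aus3} (whose content is precisely the reduced-trace retraction $r^{-1}\mathrm{tr}$ you construct), and for the second bullet the paper invokes $R\simeq\Hom_R(I,I)\in\add_RM$, which you reprove via the fractional-ideal description and the determinantal trick. You are simply supplying the proofs of the two facts the paper quotes.
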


\begin{proof}
Firstly, if $R$ contains a field of characteristic zero, we have $R\in\add_R\End_R(M)\subset\add_RM$ by \cite[5.6]{Aus3}. 
Next, we suppose that $I$ is a rank one reflexive $R$-module such that $I\in\add_R M$. Then we have $R\simeq\Hom_R(I, I)\in\add_R(M)$. 
\end{proof}

The following characterization of steady splitting modules is important.

\begin{proposition}
\label{steady_group} 
Let $R$ be a complete local normal domain, and let $\mathsf{M}$ be a finite subset of $\Cl(R)$ and $M\coloneqq\bigoplus_{X\in\mathsf{M}}X$. 
Then $M$ is steady if and only if $\mathsf{M}$ is a subgroup of $\Cl(R)$. 
\end{proposition}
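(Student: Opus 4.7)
The plan is to translate both defining conditions of steadiness (namely that $M$ is a generator and that $\End_R(M)\in\add_R M$) into elementary closure conditions on $\mathsf{M}\subseteq\Cl(R)$, and then invoke the standard finite subgroup criterion in an abelian group.

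First I would set up the class-group calculus of rank one reflexives. For $X,Y\in\refl R$ of rank one, the natural map $X^{*}\otimes_R Y\to\Hom_R(X,Y)$ is an isomorphism at every height one prime, and since $R$ is normal, $\Hom_R(X,Y)$ is already reflexive, so taking double duals yields $\Hom_R(X,Y)\cong(X^{*}\otimes_R Y)^{**}$. In particular $\Hom_R(X,Y)$ is a rank one reflexive of class $[Y]-[X]$ in $\Cl(R)$. Block-decomposing $\End_R(M)$ then gives
\[\End_R(M)=\bigoplus_{X,Y\in\mathsf{M}}\Hom_R(X,Y),\]
a direct sum of rank one reflexives whose classes run through the set $\mathsf{M}-\mathsf{M}:=\{[Y]-[X]\mid X,Y\in\mathsf{M}\}$.

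Next I would invoke Krull--Schmidt: since $R$ is complete local Noetherian, $\mod R$ has the Krull--Schmidt property, and by definition of $\Cl(R)$ the isomorphism classes of rank one reflexive $R$-modules biject with elements of $\Cl(R)$. Thus the indecomposable summands of $M$ are exactly the elements of $\mathsf{M}$, pairwise non-isomorphic, and $\add_R M$ consists precisely of those rank one reflexives whose class lies in $\mathsf{M}$. Combining with the previous step, $R\in\add_R M\iff 0\in\mathsf{M}$ and $\End_R(M)\in\add_R M\iff\mathsf{M}-\mathsf{M}\subseteq\mathsf{M}$.

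Finally I would conclude by the finite subgroup criterion for abelian groups: a finite nonempty subset $\mathsf{M}\subseteq\Cl(R)$ satisfying $\mathsf{M}-\mathsf{M}\subseteq\mathsf{M}$ is automatically a subgroup (take $x=y$ to get $0\in\mathsf{M}$, and then closure under inverses and sums follows), while the converse is clear. Observe also that since $\End_R(X)\cong R$ appears as a direct summand of $\End_R(M)$ for any $X\in\mathsf{M}$, the condition $\End_R(M)\in\add_R M$ alone already forces the generator condition, in line with the second bullet of Lemma~\ref{prop_steady}. The only mild obstacle is the identification of $\Hom_R(X,Y)$ with the class $[Y]-[X]\in\Cl(R)$; once that standard normal-domain fact is recorded, the rest is formal bookkeeping.
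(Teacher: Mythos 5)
Your proof is correct and follows the same route as the paper's: translate $\End_R(M)\in\add_R M$ into the set-theoretic condition $\mathsf{M}-\mathsf{M}\subseteq\mathsf{M}$ via the identification $\Hom_R(X,Y)\cong(X^*\otimes_R Y)^{**}$ of class $[Y]-[X]$, and then observe this is equivalent to $\mathsf{M}$ being a subgroup. The paper's proof is much terser (essentially one line), and you usefully spell out the Krull--Schmidt input, the generator condition falling out for free, and the fact that $\mathsf{M}-\mathsf{M}\subseteq\mathsf{M}$ already forces a subgroup for any nonempty $\mathsf{M}$ without invoking finiteness.
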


\begin{proof}
Let $\mathsf{M}\coloneqq\{[M_1],\ldots,[M_n]\}$. 
Then $\End_R(M)\in\add_RM$ means that $[M_i]-[M_j]\in\mathsf{M}$ holds for any $i$ and $j$. 
This is clealy equivalent to that $\mathsf{M}$ forms a subgroup of $\Cl(R)$.
\end{proof}

We denote by ${\rm K}_0(R)$ the Grothendieck group of $R$. 
The following propositions play the crucial role to proof the main theorem. 

\begin{proposition}
\label{cl_finite}
Let $R$ be a complete local CM normal domain. 
Suppose $M=\bigoplus_{i=1}^nM_i$ gives a splitting NCR of $R$, and $M$ is a generator. 
\begin{enumerate}[\rm(a)]
\setlength{\parskip}{0pt} 
\setlength{\itemsep}{0pt}
\item ${\rm K}_0(R)$ and $\Cl(R)$ are generated by $[M_1],\ldots,[M_n]$.
\item If $M$ is steady, then $\Cl(R)=\{[M_1],\ldots,[M_n]\}$.
\end{enumerate}
\end{proposition}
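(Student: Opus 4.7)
The plan is to first establish a key lemma: every $N\in\mod R$ admits a resolution $0\to P_d\to\cdots\to P_0\to N\to 0$ in which each $P_i\in\add_R M$ and $d\le \gl\Lambda$, where $\Lambda=\End_R(M)$. Once this is in hand, (a) follows from a Grothendieck-group and determinant computation, and (b) follows by combining (a) with Proposition~\ref{steady_group}.

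To prove the key lemma, let $F=\Hom_R(M,-):\mod R\to\mod\Lambda$ and set $d=\gl\Lambda$, so that $F(N)$ has projective dimension at most $d$. I proceed by induction on $\mathrm{pd}_\Lambda F(N)$. Choose a projective surjection $Q_0=F(P_0)\twoheadrightarrow F(N)$ in $\mod\Lambda$, where $P_0\in\add_R M$ via the equivalence of Lemma~\ref{projectivization}. Since $\Hom_R(P_0,N)\to\Hom_\Lambda(F(P_0),F(N))$ is bijective for $P_0\in\add_R M$ (by additivity from the case $P_0=M$, where both sides equal $F(N)$), this surjection lifts to an $R$-module map $f_0:P_0\to N$ with $F(f_0)$ surjective. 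The generator hypothesis on $M$ then forces $f_0$ itself to be surjective: every $g:M\to N$ factors through $P_0$, and $M$ being a generator means $N$ is the sum of the images of such maps. Setting $N_1=\Kernel f_0$ yields a short exact sequence $0\to F(N_1)\to F(P_0)\to F(N)\to 0$ from which $\mathrm{pd}_\Lambda F(N_1)\le d-1$, and induction supplies an $\add_R M$-resolution of $N_1$ of length $\le d-1$, which splices to the desired resolution of $N$.

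For (a), the resolution gives $[N]=\sum_i(-1)^i[P_i]$ in $\mathrm{K}_0(R)$, showing that $[M_1],\ldots,[M_n]$ generate $\mathrm{K}_0(R)$. For $\Cl(R)$, take $X$ rank-one reflexive and apply the key lemma; writing $P_i=\bigoplus_k M_k^{\oplus a_{i,k}}$, the modules $P_i$ together with all intermediate syzygies in the resolution are torsion-free (since $R$ is a domain), so additivity of determinants on short exact sequences of torsion-free modules over a normal domain yields
\[
[X]=\det X=\sum_i(-1)^i\det P_i=\sum_{i,k}(-1)^ia_{i,k}[M_k]\text{ in }\Cl(R),
\]
proving that $[M_1],\ldots,[M_n]$ generate $\Cl(R)$.

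For (b), when $M$ is steady, Proposition~\ref{steady_group} asserts that $\mathsf{M}:=\{[M_1],\ldots,[M_n]\}$ is a subgroup of $\Cl(R)$; combined with (a), which says $\mathsf{M}$ generates $\Cl(R)$, we conclude $\mathsf{M}=\Cl(R)$. The main obstacle is the key lemma — specifically the delicate interplay between full faithfulness of $F$ on $\add_R M$ and the generator property of $M$, which is what converts a projective $\Lambda$-resolution of $F(N)$ into an honest $\add_R M$-resolution of $N$; once that is secured, the $\mathrm{K}_0$ and determinant arguments are routine.
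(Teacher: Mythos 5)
Your argument is correct and follows essentially the same route as the paper: both lift a finite projective $\End_R(M)$-resolution of $\Hom_R(M,X)$ to an $\add_R M$-resolution of $X$ via Lemma~\ref{projectivization} and then pass to $\mathrm{K}_0(R)$ and $\Cl(R)$, with (b) deduced from (a) and Proposition~\ref{steady_group}. The only cosmetic differences are that you make the lifting explicit by induction on projective dimension (the paper asserts the lifted resolution directly) and you verify the passage to $\Cl(R)$ by determinant additivity instead of citing the Bourbaki surjection $\mathrm{K}_0(R)\twoheadrightarrow\Cl(R)$, which amounts to the same computation.
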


\begin{proof}
\rm(a) 
We have a surjection ${\rm K}_0(R)\twoheadrightarrow \Cl(R)$ by \cite[VII.4.7]{Bou}. 
Thus, it is enough to show that ${\rm K}_0(R)$ is generated by
$[M_1],\cdots,[M_n]$.
Let $E:=\End_R(M)$. For any $X\in\mod R$, let $Y:=\Hom_R(M,X)\in\mod E$.
Since ${\rm gl.dim}\,E<\infty$, there exists an exact sequence
\[0\to P_r\to\cdots\to P_0\to Y\to0.\] 
Since $M$ is a generator, $P_i=\Hom_R(M,N_i)$ for some $N_i\in\add_RM$ and we have 
an exact sequence below (see Lemma~\ref{projectivization}), 
\[0\to N_r\to\cdots\to N_0\to X\to0.\] 
Thus $X$ belongs to the subgroup $\langle[M_1],\cdots,[M_n]\rangle$ of $\Cl(R)$.

\rm(b) 
The assertion follows from (a) and Proposition~\ref{steady_group}. 
\end{proof}

\section{Proof of the Main Theorem}
\label{sec_main}

In this section, we give a proof of the main theorem (see Theorem~\ref{main_thm}). 
To state the theorem, we prepare general observations on graded rings.
Let $G$ be a finite abelian group and $S=\bigoplus_{i\in G}S_i$ a $G$-graded ring. We say that $S$ is \emph{strongly $G$-graded} if the map $S_j\to\Hom_{S_0}(S_i,S_{i+j})$ sending $x\in S_j$ to $(y\mapsto yx)\in\Hom_R(S_i,S_{i+j})$ is an isomorphism for any $i,j\in G$.

\medskip

Now we restate the main theorem in a detailed form. 

\begin{theorem}
\label{main_thm}
Let $R$ be a $d$-dimensional complete local Cohen-Macaulay normal domain. 
Consider the following conditions.
\begin{enumerate}[\rm(1)]
\setlength{\parskip}{0pt} 
\setlength{\itemsep}{0pt}
\item $R$ is a quotient singularity associated with a finite (small) abelian group $G\subset\GL(d, k)$ (i.e. $R=S^G$ where $S=k[[x_1, \cdots, x_d]]$ for a field $k$).
\item There exist a finite abelian group $G$ and a complete regular local ring $S$ which is strongly $G$-graded such that $R=S_0$.
\item $R$ has a unique basic module which gives a splitting NCCR. 
\item $R$ has a steady splitting NCCR.
\item $R$ has a steady splitting NCR.
\item There exists a finite subgroup $G$ of $\Cl(R)$ such that $\bigoplus_{X\in G}X$ gives an NCCR of $R$.
\item There exists a finite subgroup $G$ of $\Cl(R)$ such that $\bigoplus_{X\in G}X$ gives an NCR of $R$.
\item $\Cl(R)$ is a finite group and $\bigoplus_{X\in\Cl(R)}X$ gives an NCCR of $R$.
\item $\Cl(R)$ is a finite group and $\bigoplus_{X\in\Cl(R)}X$ gives an NCR of $R$.
\end{enumerate}
 
Then the conditions $(2)$--$(9)$ are equivalent. 

If $R$ contains an algebraically closed field of characteristic zero, then all the conditions $(1)$--$(9)$ are equivalent.
\end{theorem}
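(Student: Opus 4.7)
The plan is to organise conditions $(2)$--$(9)$ into a web of implications with two main clusters, $\{(4),(6),(8)\}$ (NCCR conditions) and $\{(5),(7),(9)\}$ (NCR conditions), tied together by the trivial $(4)\Rightarrow(5)$ and the substantial $(5)\Rightarrow(2)\Rightarrow(4)$. Condition $(3)$ is handled separately via a tensoring argument, and $(1)\Leftrightarrow(2)$ is established under the algebraically closed characteristic zero hypothesis.

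For the easy implications, $(4)\Rightarrow(5)$, $(8)\Rightarrow(6)\Rightarrow(7)$, and $(8)\Rightarrow(9)\Rightarrow(7)$ are immediate from the definitions. Proposition~\ref{steady_group} shows that the set of classes of a steady splitting module forms a finite subgroup of $\Cl(R)$, giving $(4)\Leftrightarrow(6)$ and $(5)\Leftrightarrow(7)$. Proposition~\ref{cl_finite}(b) then upgrades these to $(6)\Rightarrow(8)$ and $(7)\Rightarrow(9)$: once $\bigoplus_{X\in G}X$ is a steady splitting NCR, we have $\Cl(R)=G$, allowing us to take $G=\Cl(R)$. For $(2)\Rightarrow(6)$ take $M=S$; each $S_i$ is a rank one reflexive $R$-module, their classes form a subgroup of $\Cl(R)$, and the standard identification $\End_R(S)\simeq S*G^\vee$ exhibits $\End_R(S)$ as a non-singular $R$-order, since $S$ is regular.

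The main step is $(5)\Rightarrow(2)$. Given a steady splitting NCR $M=\bigoplus_i M_i$, Proposition~\ref{cl_finite}(b) gives $\Cl(R)=\{[M_i]\}=:G$, a finite group. Choose representatives $X_c$ for each $c\in G$ and form $S:=\bigoplus_{c\in G}X_c$, equipped with the natural $R$-algebra structure in which the multiplication $X_c\otimes_R X_{c'}\to X_{c+c'}$ factors through the reflexive closure; in the complete local normal setting these choices can be made coherently so that $S$ becomes a $G$-graded $R$-algebra with $S_0=R$. The grading is automatically strong because $\Hom_R(X_i,X_{i+j})\simeq X_j$ for rank one reflexives. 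The identification $\End_R(M)\simeq S*G^\vee$ then transfers $\gl\End_R(M)<\infty$ to $\gl S<\infty$; being a complete local Noetherian domain of finite global dimension, $S$ is regular by Auslander-Buchsbaum-Serre, establishing $(2)$.

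For $(3)$, the implication $(3)\Rightarrow(4)$ follows by noting that for the unique basic splitting NCCR $M$ and any rank one reflexive summand $X$, the functor $(-\otimes_R X)^{**}$ is an autoequivalence of $\refl R$ preserving endomorphism algebras, so $(M\otimes_R X)^{**}$ is again a basic splitting NCCR and hence isomorphic to $M$ by uniqueness; this forces $\{[M_i]\}$ to be closed under translation by $[X]$, hence a subgroup of $\Cl(R)$. Conversely, from $(8)$, every rank one reflexive $R$-module is CM (as a summand of the CM module $\End_R(\bigoplus_{X\in\Cl(R)}X)$), so Proposition~\ref{maximal_NCCR} applies to any basic splitting NCCR $M'$ and every rank one reflexive $Y$ to give $Y\in\add_R M'$, forcing $M'=\bigoplus_{X\in\Cl(R)}X$. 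Finally, $(1)\Leftrightarrow(2)$ under the algebraically closed characteristic zero hypothesis: $(1)\Rightarrow(2)$ uses the character decomposition $S=k[[x_1,\ldots,x_d]]=\bigoplus_{\chi\in G^\vee}S_\chi$ as a strongly $G^\vee$-graded ring with $S_0=R$, while $(2)\Rightarrow(1)$ linearises the induced $G^\vee$-action on the regular complete local $S$ via an equivariant Cohen structure theorem, realising $G^\vee$ as a finite abelian subgroup of $\GL(d,k)$. The principal obstacle is $(5)\Rightarrow(2)$: constructing $S$ as a well-defined $R$-algebra (the coherent choice of divisor class representatives) and transferring finite global dimension through the skew-group identification to conclude regularity.
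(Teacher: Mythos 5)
Your proposal is correct and rests on essentially the same machinery as the paper: Proposition~\ref{cl_finite} to pin down $\Cl(R)$, the strongly $G$-graded ring supplied by Proposition~\ref{prepare graded}(b), the equivalence $\mod^GS\simeq\mod(S\#G)\simeq\mod\End_R(S)$ from Proposition~\ref{prepare graded}(a), Auslander--Buchsbaum--Serre to conclude $S$ is regular, and Proposition~\ref{maximal_NCCR} for the uniqueness statement in $(3)$ --- only the routing of the implications is reorganised. The one place to tighten is the assertion in $(5)\Rightarrow(2)$ that finite global dimension of $\End_R(M)$ ``transfers'' to $\gl S<\infty$: the module-category equivalence only yields finiteness of the \emph{graded} global dimension of $S$, and (as the paper spells out) one must then observe that a finite graded projective resolution of the graded residue field $k$, upon forgetting the grading, remains a finite projective resolution of $k$ as an $S$-module, because graded projective modules over the graded (complete) local ring $S$ are free; this is what lets Auslander--Buchsbaum--Serre apply. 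Your $(3)\Rightarrow(4)$ via the autoequivalences $(-\otimes_RX)^{**}$ is a pleasant variant of the paper's argument, which instead uses $\Hom_R(N_i,N)$: you deduce that the set of class-group labels of $M$ is stable under translation by any of its own elements, hence (being finite) is a subgroup of $\Cl(R)$, and then invoke Proposition~\ref{steady_group}; the paper produces $\End_R(N)\simeq N^{\oplus n}$ directly. Both arguments are correct, and both ultimately encode the same shift-by-a-rank-one-class idea.
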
 

In order to prove this, we need some preparations.
Let $G$ be a finite abelian group and $S=\bigoplus_{i\in G}S_i$ a $G$-graded ring. Then the \emph{smash product} \cite{CM} 
\[ S\# G:=(S_{j-i})_{i,j\in G}\]
is a ring whose multiplication is given by the multiplication in $S$ and the matrix multiplication rule. 
We have a natural morphism
\begin{equation}\label{SG and End}
\phi:S\# G\to\End_R(S)
\end{equation}
of rings sending $(x_{ij})_{i,j\in G}\in S\# G$ to $(S\ni (y_j)_{j\in G}\mapsto (\sum_{i\in G}y_ix_{ij})_{j\in G}\in S)\in\End_R(S)$.
Clearly $S$ is strongly $G$-graded if and only if $\phi$ is an isomorphism.

We need the following observations to prove Theorem~\ref{main_thm}.

\begin{proposition}\label{prepare graded}
\begin{enumerate}[\rm(a)]
\setlength{\parskip}{0pt} 
\setlength{\itemsep}{0pt}
\item Let $G$ be a finite abelian group and $S$ a $G$-graded ring. Then we have an equivalence $\Mod^GS\simeq\Mod(S\# G)$, which preserves finitely generated modules.
\item Let $R$ be a normal domain and $G$ a finite subgroup of $\Cl(R)$. Then there exists a strongly $G$-graded ring $S$ such that $S_0=R$ and $[S_i]=i$ in $\Cl(R)$ for any $i\in G$.
Moreover, if $R$ is complete local, then so is $S$.
\item Let $G$ be a finite abelian group, $S$ a normal domain which is $G$-graded and $R=S_0$.
If $\Cl(S)=0$ holds and the $R$-module $S_i$ has rank one for any $i\in G$, then $\Cl(R)=\{[S_i]\mid i\in G\}$.
\end{enumerate}
\end{proposition}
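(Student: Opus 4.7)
For part (a), the plan is to invoke Cohen--Montgomery duality: associate to a $G$-graded $S$-module $M=\bigoplus_{i\in G}M_i$ the $S\# G$-module with the same underlying set, where the matrix unit $e_{ij}(x)\in S\# G$ with $x\in S_{j-i}$ acts on $M_\ell$ as $x\cdot(-)$ when $\ell=i$ (sending it into $M_j$) and as zero otherwise; dually, any $S\# G$-module $N$ carries orthogonal idempotents $e_i:=e_{ii}(1)$ whose images give a grading on $\bigoplus_ie_iN$ with components naturally $S_0$-modules and homogeneous $S_j$-action via the off-diagonal matrix units. One then checks routinely that these functors are mutually quasi-inverse and preserve finite generation, since $\sum_ie_i=1$ forces finite generation of $N$ over $S\# G$ to be equivalent to finite generation of $\bigoplus_ie_iN$ in $\Mod^GS$.

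For part (b), the plan is to realize $S=\bigoplus_{i\in G}I_i$ as a subring of the fraction field $K=\operatorname{Frac}(R)$, where $I_i\subset K$ is a chosen divisorial fractional ideal with $[I_i]=i\in\Cl(R)$ and $I_0=R$. Since $I_i\cdot I_j$ and $I_{i+j}$ represent the same class, there exist scalars $c_{i,j}\in K^\times$, unique modulo $R^\times$, with $c_{i,j}\cdot(I_iI_j)^{\ast\ast}=I_{i+j}$; taking the multiplication on $S$ to be $(a,b)\mapsto c_{i,j}^{-1}ab$ on $I_i\times I_j$ makes $S$ an associative $R$-algebra once the cocycle identity $c_{i,j}c_{i+j,k}=c_{i,j+k}c_{j,k}$ holds. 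This is the crux: we must show that the family $\{c_{i,j}\}$ can be chosen to satisfy this $2$-cocycle relation, which amounts to the vanishing of a certain class in $H^2(G,R^\times)$. The plan is to leverage the completeness and localness of $R$ so that $R^\times$ has enough divisibility (or equivalently to use a Kummer-type construction in the presence of enough roots of unity) to trivialize the cocycle. Strongness of the grading is then automatic from $I_i\cdot I_{-i}=R$, while localness and completeness of $S$ descend along the module-finite extension $R\subset S$.

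For part (c), take any rank-one reflexive $R$-module $J$ and form the reflexive graded $S$-module $M:=(J\otimes_RS)^{\ast\ast}$, which is rank one over $S$. Since $\Cl(S)=0$, $M$ is principal; and because $M$ is graded one can take the generator to be homogeneous of some degree $-d\in G$, giving $M\simeq S(d)$ as a graded $S$-module. Comparing degree-zero parts yields $J\simeq M_0\simeq S_{-d}$ as $R$-modules, so $[J]=[S_{-d}]\in\{[S_i]\mid i\in G\}$; the reverse inclusion is immediate from the hypothesis that each $S_i$ has rank one. The technical point to justify here is that a principal graded reflexive fractional ideal over a $G$-graded normal domain admits a homogeneous generator, which follows from the standard fact that graded divisorial ideals correspond to graded submodules of the graded quotient field and that double-dualization in $\mod S$ respects the grading when applied to a finitely generated graded module.

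The main obstacle is clearly part (b): constructing the multiplication on $S=\bigoplus_{i\in G}I_i$ requires a compatible choice of rescaling scalars $c_{i,j}$, equivalently the vanishing of a $2$-cocycle valued in $R^\times$, and it is precisely the completeness and localness of $R$ that should allow this cocycle to be trivialized, while parts (a) and (c) reduce to well-known manipulations once the graded ring $S$ is in hand.
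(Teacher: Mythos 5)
Parts (a) and (c) are fine. For (a) you re-derive the Cohen--Montgomery duality from scratch, where the paper simply cites \cite[Theorem~3.1]{IL}; your description of the two mutually quasi-inverse functors via the orthogonal idempotents $e_i:=e_{ii}(1)$ is the standard argument and is correct, including the remark about preservation of finite generation. For (c) your argument is essentially identical to the paper's: pass to $(J\otimes_RS)^{**}$, use $\Cl(S)=0$ to get a graded isomorphism with a shift $S(i)$, and extract the degree-zero part. The homogeneity issue you flag is real but is handled by the injectivity of the natural map from the $G$-graded class group of $S$ to $\Cl(S)$, a standard fact about group-graded normal domains.

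The gap is in (b), and it is a genuine one. The statement of (b) asks for a strongly $G$-graded ring $S$ with $S_0=R$ and $[S_i]=i$ for an \emph{arbitrary} normal domain $R$; the hypotheses of completeness and localness appear only in the ``moreover'' clause and are used solely to transfer those properties from $R$ to the module-finite extension $S$. Your plan, however, hinges on ``leveraging completeness and localness of $R$ so that $R^\times$ has enough divisibility to trivialize the cocycle.'' That would, at best, establish the special case of (b) that happens to appear in the application, but it would not prove (b) as stated; moreover, the implication $(7)\Rightarrow(2)$ in the main theorem invokes (b) before any assumption about the residue field is imposed, so the weaker statement would not even suffice for the paper's purposes. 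In fact no hypothesis beyond $R$ being a normal domain and $G$ being finite abelian is needed: choose a decomposition $G\cong\mathbb{Z}/n_1\times\cdots\times\mathbb{Z}/n_k$ with generators $e_1,\ldots,e_k$, lift each $e_j$ to a divisorial fractional ideal $I_j$ and pick $f_j\in K^\times$ with $(I_j^{n_j})^{**}=f_jR$; then set $S_i:=(I_1^{a_1}\cdots I_k^{a_k})^{**}$ for $i=\sum a_je_j$ with $0\le a_j<n_j$ and embed $\bigoplus_iS_i\cdot t_1^{a_1}\cdots t_k^{a_k}$ in the $K$-algebra $K[t_1,\ldots,t_k]/(t_1^{n_1}-f_1^{-1},\ldots,t_k^{n_k}-f_k^{-1})$. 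This explicit diagonalization shows that the would-be cocycle is always a coboundary for finite abelian $G$; the obstruction you worry about never actually arises, and in particular one does not need $H^2(G,R^\times)=0$ (which is false in general even when $R^\times$ is divisible, as the antisymmetric part $\Hom(\Lambda^2G,R^\times)$ survives --- it is only irrelevant here because the construction is commutative by design). The paper delegates all of this to \cite[Proposition~2.12]{DITW}. Your proposal as written would not prove (b) in the generality claimed, so you should either carry out the explicit construction above or cite the reference, and separately verify the ``moreover'' part, which is the only place completeness and localness enter.
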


\begin{proof}
For (a), we refer to \cite[Theorem~3.1]{IL}. For (b), we refer to \cite[Proposition~2.12]{DITW}. 

(c) Let $I$ be a divisorial ideal of $R$. Then $(S\otimes_RI)^{**}$ is a divisorial ideal of $S$
which is $G$-graded. Since $\Cl(S)=0$, we have an isomorphism
$(S\otimes_RI)^{**}\simeq S(i)$ of $G$-graded $S$-modules. 
Taking the degree zero part, we have an isomorphism $I\simeq S_i$ of $R$-modules.
\end{proof}

Now we are ready to prove Theorem~\ref{main_thm}.

\begin{proof}[Proof of Theorem~\ref{main_thm}] 
We prove the following implications, where the implications written by the arrows $\xymatrix@C2em{\ar@3{->}[r]&}$ are clear.
\[\xymatrix{
(4)\ar@{=>}[r]\ar@3{->}[d]&(8)\ar@3{->}[r]\ar@3{->}[d]&(6)\ar@3{->}[d]\\
(5)\ar@{=>}[r]&(9)\ar@3{->}[r]&(7)\ar@{=>}[r]&(2)\ar@{=>}[r]&(3)\ar@{=>}[r]&(4)
}\]

(4)$\Rightarrow$(8) (resp. (5)$\Rightarrow$(9)) This follows from Proposition~\ref{cl_finite} (b). 

(7)$\Rightarrow$(2) By Proposition \ref{prepare graded} (b), there exists a strongly $G$-graded ring $S=\bigoplus_{X\in G}X$ such that $S$ is a complete local and $S_0=R$ and $[S_i]=i$ hold in $\Cl(R)$ for any $i\in G$.
We only have to show that $S$ is a regular, or equivalently, the residue field $k$
has a finite projective dimension as an $S$-module.
By Proposition \ref{prepare graded} (a), we have equivalences
\begin{equation*}
\mod^GS\simeq\mod(S\# G)\simeq\mod\End_R(S).
\end{equation*}
Since $S$ gives an NCR of $R$, the abelian category $\mod^GS$ has a finite global dimension.
Thus $k$ has a projective resolution in $\mod^GS$ which has finite length.
Forgetting the grading, the $S$-module $k$ has a finite projective dimension.

 (2)$\Rightarrow$(3). Since $S$ is strongly $G$-graded, we have an isomorphism $S\# G\simeq\End_R(S)$ of rings in \eqref{SG and End}. 
In particular $\End_R(S)$ belongs to $\CM R$. By Proposition \ref{prepare graded}
(a), we have equivalences $\mod^GS\simeq\mod\End_R(S)$.
Since $S$ is regular, the category $\mod^GS$ has finite global dimension at most $\dim S$. Therefore $S$ gives an NCCR of $R$. 

It remains to prove that there are no other splitting $R$-modules giving NCCRs up to additive equivalences. By Proposition \ref{prepare graded} (c), any rank one reflexive $R$-module is a direct summand of $S$. Thus the assertion follows from Proposition \ref{maximal_NCCR}. 

(3)$\Rightarrow$(4) We suppose $N=N_1\oplus\cdots\oplus N_n$ is basic, and it gives the unique splitting NCCR. 
Since $\End_R(\Hom_R(N_i,N))\simeq\End_R(N)$, $\Hom_R(N_i,N)$ also gives a splitting NCCR. 
By the uniqueness, $\Hom_R(N_i,N)\simeq N$ for any $i$. Thus, $\End_R(N)=\bigoplus^n_{i=1}\Hom_R(N_i,N)\simeq N^{\oplus n}$ is a steady NCCR. 

\medskip

Thus we have shown that all the conditions (2)--(9) are equivalent.

In the rest, we assume $R$ contains an algebraically closed field of characteristic zero. 

(1)$\Rightarrow$(4) This is shown in Example~\ref{splitting_example}(a).

(2)$\Rightarrow$(1) 
The dual abelian group $G^\vee$ acts on $S$ by $f(x)=f(i)x$ for any $f\in G^\vee$, $i\in G$ and $x\in S_i$. 
Moreover, the action of $G$ is linearizable (see e.g. \cite[5.3]{LW}), and we can assume that $G$ is a small subgroup of $\GL(d,k)$ (see the proof of Example \ref{steady_example}).
Since $R=S_0$, we have the assertion. 
\end{proof}

Next we prove Corollary \ref{toric}.

\begin{proof}[Proof of Corollary \ref{toric}] 
Suppose that $R\simeq \widehat{A}$ is the $\mm$-adic completion of a toric singularity $A$ where $\mm$ is the irrelevant maximal ideal. 
That is, let $\sfS$ be a positive affine normal semigroup, and let $A:=k[\sfS]$. Here, we note the relationship of class groups. 
We define a natural morphism 
\[
\Cl(A)\rightarrow\Cl(R) \quad \left(\,[I]\mapsto [\,\widehat{I}\simeq I\otimes_A R\,]\,\right). 
\]
This is injective (see e.g. \cite[15.2]{Yos}).

It is clear that the equivalent conditions in Theorem~\ref{main_thm} implies $\Cl(R)$ is a finite group. 
Conversely, we assume $\Cl(R)$ is a finite group. 
Since we have $\Cl(A)\hookrightarrow\Cl(R)$, $\Cl(A)$ is also a finite group. 
Therefore, a semigroup $\sfS$ is a simplicial, and it is equivalent to $A$ is a quotient singularity associated with a finite abelian group (see e.g. \cite[4.59]{BG}, \cite[1.3.20]{CLS}). Consequently, $R$ is also a quotient singularity associated with a finite abelian group.
\end{proof}

Finally we prove Corollary \ref{dimer}.

\begin{proof}[Proof of Corollary \ref{dimer}]
The second and the third conditions are equivalent to each other by Theorem~\ref{main_thm} (1)$\Leftrightarrow$(3)$\Leftrightarrow$(4). It remain to show that they are equivalent to the first condition.

Assume that $\Gamma$ is a regular hexagonal dimer model. 
Then by \cite[Lemma 5.3]{HIO}, there exists a finite abelian subgroup $G$
of $\SL(3,k)$ such that $\mathcal{P}(Q_\Gamma,W_\Gamma)=k[[x_1,x_2,x_3]]*G$.
Thus $R=k[[x_1,x_2,x_3]]^G$ holds.

Assume that $R=S^G$ with $S=k[[x_1,x_2,x_3]]$ and a finite small abelian subgroup $G$ of $\SL(3,k)$. By Theorem~\ref{main_thm} (1)$\Rightarrow$(3), $S$ is a unique $R$-module which gives an NCCR up to additive equivalence.
Thus we have $\mathcal{P}(Q_{\Gamma},W_{\Gamma})\simeq\End_R(S)\simeq S*G$ as $R$-algebras. This implies that $Q_\Gamma$ is the McKay quiver of $G$, and therefore each face of $\Gamma$ is hexagon (see \cite[Section~5]{UY}) and it is homotopy equivalent to a regular hexagonal dimer model.
\end{proof}

\section{Remarks and Questions}

We end this paper by a few remarks. 
We do not know an answer to the following question.

\medskip

\begin{question}
Assume that $R$ contains an algebraically closed field of characteristic zero.
Does existence of steady NCCRs of $R$ imply that $R$ is a quotient singularity?
\end{question}

This is true in dimension two. More strongly, we have the following observation.

\begin{proposition}
\label{twodim}
For a two dimensional complete local normal domain $R$ containing an algebraically closed field of characteristic zero, the following conditions are equivalent.
\begin{enumerate}[\rm(a)]
\setlength{\parskip}{0pt} 
\setlength{\itemsep}{0pt}
\item $R$ is a quotient singularity associated with a finite group $G\subset\GL(2, k)$ (i.e. $R=S^G$ where $S=k[[x_1, x_2]]$ for a field $k$).
\item $R$ has a steady NCCR.
\item $R$ has an NCCR. 
\item $R$ has only finitely many isomorphism classes of indecomposable MCM R-modules.
\end{enumerate}
Moreover, in this case, NCCRs of $R$ are precisely additive generators of $\CM R$, and hence any NCCR of $R$ is steady.
\end{proposition}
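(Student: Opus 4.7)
The plan is to establish the cycle $(a)\Rightarrow(b)\Rightarrow(c)\Rightarrow(d)\Rightarrow(a)$ and to extract the ``moreover'' clause from the arguments.

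The implication $(a)\Rightarrow(b)$ is Example~\ref{steady_example}(a): for $R=S^G$ with $S=k[[x_1,x_2]]$, the module $S$ gives a steady NCCR. The implication $(b)\Rightarrow(c)$ is a tautology. The implication $(d)\Rightarrow(a)$ is a classical theorem (Herzog, Auslander, Esnault--Kn\"orrer): a two-dimensional complete local normal domain over an algebraically closed field of characteristic zero of finite CM type is necessarily a quotient singularity $S^G$ for some finite $G\subset\GL(2,k)$.

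The main work lies in $(c)\Rightarrow(d)$. Suppose $M\in\refl R$ gives an NCCR. Since $R$ is normal of dimension $2$, $\refl R=\CM R$, and both $M$ and $M^{\ast}=\Hom_R(M,R)$ lie in $\CM R$; hence $\End_R(M\oplus R)\in\CM R$, so Proposition~\ref{maximal_NCCR} forces $R\in\add_RM$ and $M$ is a generator. The critical step is then to upgrade this to $\add_RM=\CM R$. For any $X\in\CM R$, Lemma~\ref{projectivization} combined with $\gl\End_R(M)=2$ yields a resolution $0\to N_2\to N_1\to N_0\to X\to 0$ with $N_i\in\add_RM$, and a depth count in dimension two shows the intermediate syzygy stays in $\CM R$. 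This is the specialization to $d=2$ of Iyama's characterization \cite{Iya,IW2} of NCCRs in terms of $(d{-}1)$-cluster tilting subcategories of $\CM R$; for $d=2$ the $1$-cluster tilting condition carries no $\Ext$-vanishing requirement and hence forces $\add_RM=\CM R$. In particular $R$ has finite CM type, proving $(d)$.

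The ``moreover'' clauses now follow directly. The argument for $(c)\Rightarrow(d)$ shows every NCCR $\End_R(M)$ of $R$ satisfies $\add_RM=\CM R$, i.e.\ $M$ is an additive generator of $\CM R$; conversely, under finite CM type the classical Auslander algebra of $\CM R$ has global dimension $\le 2 = \dim R$ and is CM over $R$, so any additive generator of $\CM R$ gives an NCCR. Steadiness is then automatic, since $\End_R(M)\in\CM R=\add_RM$. The main obstacle in the whole argument is precisely the step ``$\add_RM=\CM R$'' in $(c)\Rightarrow(d)$: Lemma~\ref{projectivization} together with finite global dimension of $\End_R(M)$ only produces finite $\add_RM$-resolutions of CM modules, not membership in $\add_RM$, and bridging this gap is the content of the $d=2$ case of Iyama's cluster-tilting characterization of NCCRs.
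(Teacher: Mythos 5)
Your proof follows the same cycle $(a)\Rightarrow(b)\Rightarrow(c)\Rightarrow(d)\Rightarrow(a)$ as the paper, citing the same ingredients: Example~\ref{steady_example}(a) for $(a)\Rightarrow(b)$, the classical finite-CM-type characterization for $(d)\Rightarrow(a)$, and the dimension-two fact that NCCRs are additive generators of $\CM R$ for $(c)\Rightarrow(d)$ and the ``moreover'' clause. One remark on $(c)\Rightarrow(d)$: the resolution argument you sketch is, as you yourself concede, a detour that does not directly yield $X\in\add_R M$; the shortest route --- and the one the paper's parenthetical ``see Proposition~\ref{maximal_NCCR}'' is gesturing at --- is to apply Proposition~\ref{maximal_NCCR} to an arbitrary $X\in\CM R$, noting that over a two-dimensional normal CM domain $\Hom_R$ of reflexive modules is reflexive hence MCM, so $\End_R(M\oplus X)\in\CM R$ holds automatically and therefore $X\in\add_R M$; this gives $\add_R M=\CM R$ in a single step and makes the separate generator check and the appeal to cluster-tilting theory unnecessary.
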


\begin{proof}
(a)$\Rightarrow$(b) This is shown in Example \ref{steady_example} (a).

(b)$\Rightarrow$(c) This is clear.

(c)$\Rightarrow$(d) It is well-known that, in dimension two, modules giving NCCRs are precisely additive generators of $\CM R$ (see Proposition~\ref{maximal_NCCR}). 

(d)$\Rightarrow$(a) This is well-known (see \cite[2.1 and 4.9]{Aus3}, \cite[Chapter~11]{Yos}).
\end{proof}

We pose the following question, which we do not know an answer even for quotient singularities. 

\begin{question}
Are all steady NCCRs Morita equivalent?
More strongly, are all modules giving steady NCCRs additive equivalent?
\end{question}

This is true in dimension two by Proposition~\ref{twodim}.

\end{document}